\setlist[itemize]{noitemsep}
\pgfplotsset{compat=1.15}
\newtheorem{theoremA}{Theorem}
\newtheorem{theoremB}{Theorem}
\numberwithin{equation}{section}
\theoremstyle{plain}
\newtheorem{theorem}{Theorem}[section]
\newtheorem{proposition}[theorem]{Proposition}
\newtheorem{corollary}[theorem]{Corollary}
\newtheorem{lemma}[theorem]{Lemma}
\theoremstyle{definition}
\newtheorem{definition}[theorem]{Definition}
\newtheorem{remark}[theorem]{Remark}
\newtheorem{notation}[theorem]{Notation}
\newtheorem{example}[theorem]{Example}
\newtheorem{question}[theorem]{Question}
\newcommand\restr[2]{{
  \left.\kern-\nulldelimiterspace 
  #1 
  \vphantom{\small|} 
  \right|_{#2} 
  }}
\def\mathcenterto#1#2{\mathclap{\phantom{#1}\mathclap{#2}}\phantom{#1}}
\let\old@widetilde\widetilde
\def\widetildeto#1#2{\mathcenterto{#2}{\old@widetilde{\mathcenterto{#1}{#2\,}}}}
\let\old@widehat\widehat
\def\widehatto#1#2{\mathcenterto{#2}{\old@widehat{\mathcenterto{#1}{#2\,}}}}
\newtheorem{algorithm}[theorem]{Algorithm}
\newcommand{\ip}[2]{\langle  #1,#2 \rangle} 
\newcommand{\is}[1]{\langle  #1 \rangle} 
\newcommand{\size}[1]{\left| #1 \right|} 
\newcommand{\pare}[1]{\left( #1 \right)} 
\newcommand{\set}[1]{{\left\{ #1 \right\}}} 
\newcommand{\Li}{{\mathfrak{S}}}
\newcommand{\corch}[1]{\left[ #1 \right]} 
\newcommand*\closure[1]{\overline{#1}}
\DeclareMathOperator{\rank}{rk}
\DeclareMathOperator{\im}{im}
\def\dim{\operatorname{dim}}
\DeclareMathOperator{\CC}{\mathbb{C}}
\def\Id{\operatorname{Id}}
\setlist[itemize]{noitemsep}
\title{Solvable and Nilpotent Matroids: Realizability and Irreducible Decomposition of Their Associated Varieties}
\author{Emiliano Liwski and Fatemeh Mohammadi}
\date{}
\begin{document}
\maketitle

\begin{abstract}
We introduce the families of solvable and nilpotent matroids, examining their realization spaces, closures, and associated matroid and circuit varieties. We study their realizability, as well as the irreducible decomposition 
of their associated matroid and circuit varieties. Additionally, we describe 
a finite generating set for the corresponding ideals, considered up to radical.
We establish sufficient conditions for both the realizability of these matroids and the irreducibility of their associated varieties. Specifically, we establish the realizability and irreducibility of matroid varieties associated with nilpotent matroids and prove the irreducibility of matroid varieties arising from certain classes of solvable paving matroids.
Additionally, we analyze the defining polynomial equations of these varieties using Grassmann-Cayley algebra and geometric liftability techniques. Furthermore, we provide a complete generating set for the 
matroid ideals associated with forest configurations.
\end{abstract}

{\hypersetup{linkcolor=black}
{\tableofcontents}}

\vspace{-2mm
}

\section{Introduction}
Matroid realization spaces have long been a central research focus in algebraic combinatorics and algebraic topology, providing an extensive source of
singular spaces in algebraic geometry. The Mnëv-Sturmfels universality theorem demonstrates that matroid realization spaces adhere to the Murphy's law in algebraic geometry, meaning that for each singularity type, there exists a matroid of rank $3$
whose realization space exhibits that singularity; see e.g.~\cite{mnev1985manifolds, mnev1988universality,lee2013mnev}.

Moreover, there is a natural correspondence between the realization spaces of matroids and those of hyperplane arrangements. Each collection of vectors realizing a matroid $M$ corresponds to a hyperplane arrangement in the dual vector space; \cite{bokowski1989computational,richter1999universality}. Thus, the matroid realization space serves as a parameter space for hyperplane arrangements with fixed combinatorial structure. Furthermore, hyperplane arrangements within the same connected component of the realization space have diffeomorphic complements \cite{randell1989lattice}. A natural question is to determine when the 
realization space is irreducible. In general, this does not hold, as demonstrated by Rybnikov in \cite{rybnikov2011fundamental}, where he constructed a counterexample consisting of two arrangements with the same combinatorial structure whose complements have non-isomorphic fundamental groups.

\smallskip

Our primary focus on matroid realization spaces is centered on the challenge of determining their irreducible decomposition. As noted in \cite{knutson2013positroid}, this is generally a difficult problem, and the existing literature offers limited insights. In \cite{nazir2012connectivity}, it was shown that the realization spaces of inductively connected line arrangements are irreducible. This result was extended in \cite{Emiliano-Fatemeh} to higher-rank matroids, specifically inductively connected hyperplane arrangements, where it was shown that their realization spaces are irreducible. 

\smallskip

In this paper, we introduce the families of solvable and nilpotent matroids,
and
study the irreducible decomposition of their realization spaces. The family of solvable matroids is quite extensive, encompassing nearly all examples discussed in the literature and serving as a natural generalization of inductively connected line arrangements \cite{nazir2012connectivity} to higher dimensions. On the other hand, 
nilpotent matroids are defined as a natural extension of forest configurations \cite{clarke2021matroid} to higher dimensions.

\smallskip

Matroid varieties were first introduced in \cite{gelfand1987combinatorial} and have since been widely studied; see~e.g.~\cite{clarke2021matroid,sidman2021geometric,Fatemeh3}. 
These varieties  
naturally extend the notion of matroid realization spaces, by defining them as their Zariski closure. These varieties
play a fundamental role in the study of determinantal varieties in commutative algebra, \cite{bruns2003determinantal,clarke2020conditional,clarke2021matroid,herzog2010binomial,pfister2019primary}, and have
applications in conditional independence (CI) models in algebraic statistics, \cite{clarke2021matroid,clarke2020conditional,clarke2022conditional,caines2022lattice, mohammadi2018prime, ene2013determinantal}. The 
central challenge regarding matroid varieties is to compute their defining equations, or equivalently, the ideals of the matroid varieties, $I_{M}$, and to 
provide a combinatorial interpretation of these polynomials. The classical method for computing such polynomials is  via the Grassmann-Cayley algebra; see e.g.~\cite{computationalgorithms,sidman2021geometric}. Recently, in \cite{Fatemeh4}, a new
method for constructing such polynomials was given, relying on the liftability technique; see also~\cite{richter2011perspectives}. 

Determining a generating set for a general matroid ideal $I_{M}$, up to radical, remains a challenging task. Existing algorithms for computing these generators have high complexity, thus restricting their applicability to very small matroids. This difficulty emerges  
from the saturation step  
involved in the process, see e.g.~\cite{white2017geometric, sturmfels1989matroid, Stu4, sitharam2017handbook}. Additionally, Gr\"obner basis techniques 
also become extremely difficult when the matroid does not belong to a particularly well-behaved family. 
Due to the difficulty involved in calculating a generating set, up to radical, for $I_M$, 
considerable effort has been focused on producing specific polynomials within this ideal, even if  
a complete generating set is not achieved,  
especially for certain classes of matroids; see e.g.~\cite{sturmfels1989matroid,sidman2021geometric,white2017geometric, sturmfels1989matroid, Stu4, sitharam2017handbook}. In this work, we compute a finite generating set, up to radical, for the matroid ideals of forest 
configurations 
and construct polynomials within the matroid ideals of nilpotent matroids.

\smallskip
\noindent{\bf Our contributions.}
We  
now summarize the main results of this paper. In Definitions~\ref{k2} and~\ref{m7}, we introduce the classes of {\em nilpotent and solvable} \footnote{We refer to these families of matroids as ``nilpotent” and ``solvable” by analogy with nilpotent and solvable groups, as both are characterized by the eventual triviality of a certain descending chain of submatroids. Furthermore, every nilpotent matroid is solvable. It is important to note, however, that these families of matroids are purely combinatorial.} matroids, respectively. 
 The families of nilpotent and solvable matroids generalize the notions of forest configurations from Definition~\ref{forest} and inductively connected line arrangements \cite{nazir2012connectivity}, to higher dimensions.

 We begin by defining solvable and nilpotent matroids, where the notions of $\Li_{M}$, $\Li_{p}$, and $a_{p}$ are introduced in Definition~\ref{general} and Notation~\ref{m6}.

\begin{definition}[Definitions~\ref{k2} and~\ref{m7}]
For a matroid $M$ on $[n]$, we define 
\[S_{M}=\{p \in [n]: \lvert \Li_p \rvert > 1\}, \quad \text{and} \quad Q_{M}=\{p \in [n]: a_{p}\leq 0\}.\] 
We then consider the following chains of submatroids of $M$: 
\begin{equation*}
\begin{aligned}
&M_{0}=M, \ \quad M_{1}=S_{M},\quad \text{and }\quad M_{j+1}=S_{M_{j}} \quad \ \text{ for all $j\geq 1$.}\\
& M^{0}=M,\quad M^{1}=Q_{M},\quad \text{and }\quad M^{j+1}=Q_{M^{j}} \quad \text{ for all $j\geq 1$.}
\end{aligned}
\end{equation*}
We say that $M$ is {\em nilpotent} if $M_{j}=\emptyset$ for some $j$, and 
{\em solvable} if $M^{j}=\emptyset$ for some $j$.
\end{definition}

We now provide some motivation for these definitions. As introduced in Definition~\ref{general}, $\Li_{M}$ denotes the set of all subspaces of $M$. Informally, if $p \in [n]$ lies in a subspace $l \in \Li_{M}$, then $l$ imposes a constraint on $p$, as any realization of $M$ must place $p$ in the linear subspace~$l$.


Nilpotent and solvable matroids are defined through iterative processes that remove relatively unconstrained points in realizations, continuing until none remain. The distinction between these two notions lies in the criterion for what qualifies as unconstrained:

\begin{itemize}
\item In a nilpotent matroid, unconstrained points are those contained in a unique element $l$ of $\Li_{M}$. Geometrically, such points are straightforward to realize, as they are subject to a single linear constraint; any generic point on the subspace realizing $l$ works.
\item In a solvable matroid, unconstrained points are those for which the sum of the codimensions of the elements of $\Li_{M}$ they belong to is strictly less than the rank. This condition ensures that the intersection of the associated linear subspaces has nonempty projectivization, guaranteeing a location where the point may be placed.
\end{itemize}

By construction, both nilpotent and solvable matroids have a natural filtration, which plays a key role in the inductive structure of our arguments.

\medskip
We study the realizability of these matroids and the irreducible decomposition of their realization spaces, establishing the following results. For reference, we briefly mention the notions of the realization space $\Gamma_{M}$, the matroid variety $V_{M}$, and the circuit variety $V_{\mathcal{C}(M)}$ associated with a matroid $M$, as introduced in Subsection~\ref{sec reali}. Additionally, we refer to the definitions of paving matroids, forest configurations, and the Grassmann-Cayley ideal $G_{M}$ from Definitions~\ref{pav},~\ref{forest}, and~\ref{gm}, respectively.

\begin{theoremA}
Let $M$ be a nilpotent matroid and let $\Gamma_{M},V_{M}$ and $V_{\mathcal{C}(M)}$ denote its realization space, matroid variety and circuit variety, respectively, as defined in Subsection~\ref{sec reali}. Then the following statements hold:
\begin{itemize}
\item $M$ is realizable, and $\Gamma_{M}$ is irreducible. \hfill
\textup{(Theorem~\ref{nil rea})}
\item If $M$ is a nilpotent paving matroid in which any three dependent hyperplanes have an empty intersection, then the following equality holds:
\[V_{M}=V_{\mathcal{C}(M)}\tag{Theorem~\ref{je}}.\]
\item If $M$ is a forest configuration, then $M$ is nilpotent and the following equality holds:
\[I(V_{M})=\sqrt{I(V_{\mathcal{C}(M)})+G_{M}}\tag{Corollary~\ref{incl} and Theorem~\ref{gc 3}},\]
where $G_{M}$, as defined in Definition~\ref{gm}, is the ideal generated by polynomials derived from the Grassmann-Cayley algebra.
\end{itemize}
\end{theoremA}


\begin{theoremB}
Let $M$ be 
matroid. The following statements hold:
\begin{itemize}
\item If  
$M$ is a solvable matroid of rank three, then $\Gamma_{M}$ is irreducible. \hfill \textup{(Theorem~\ref{teosol})}
\smallskip
\item  Let $M$ be a paving matroid in which any three dependent hyperplanes have an empty intersection. Then $M$ is solvable, and 
$\Gamma_{M}$ is irreducible. \hfill \textup{(Example~\ref{ktu 2} and Theorem~\ref{irreducible})}
\end{itemize}
\end{theoremB}

We regard the empty set as irreducible in the Zariski topology; therefore, our statement does not imply that all rank-three solvable matroids are realizable, which is certainly not the case. For instance, the non-Pappus matroid is a non-realizable solvable matroid of rank three; see Figure~1(b) in \textup{\cite{rosen2020algebraic}} and the discussion therein for further details.

\medskip

\noindent{\bf Related work.} We 
now discuss some related works. The families of nilpotent and solvable matroids generalize the notions of forest configurations and inductively connected line arrangements from \cite{clarke2021matroid} and \cite{nazir2012connectivity}, respectively.
Theorem~\ref{nil rea} establishes the realizability of nilpotent matroids,  thereby generalizing \textup{\cite[Proposition~5.7]{clarke2021matroid}}, which proved the realizability of forest 
configurations. 
Furthermore, \textup{\cite[Theorem~5.11]{clarke2021matroid}} established that for forest 
configurations without points of degree greater than two,
the matroid variety and the circuit variety coincide. In Theorem~\ref{je}, we 
extend this result to the family of nilpotent paving matroids without points of degree greater than two. 
Moreover, Theorem~\ref{gc 3} provides the defining equations of matroid varieties associated with forest 
configurations, even when points of degree greater than two appear. Moreover, the families of nilpotent and solvable matroids play a role in the study of hypergraph variety decompositions; see Section~\ref{applications}.



\medskip\noindent{\bf Outline.}
 Section~\ref{sec 2} introduces key concepts such as paving matroids 
 and matroid varieties. 
In Section~\ref{sec 4}, we introduce the nilpotnet matroids, establishing their realizability and the irreducibility of their matroid varieties. Moreover, we study the defining equations of nilpotent paving matroids without points of degree greater than two and forest 
configurations. 
In Section~\ref{solv}, we introduce solvable matroids and examine the irreducible decomposition of matroid varieties associated with a particular subfamily. In Section~\ref{applications}, we explore the applications of these matroid families to the study of hypergraph varieties.

\medskip\noindent{\bf Acknowledgement.}
We thank the referees for their very helpful suggestions and remarks. E.L. is supported by the FWO PhD fellowship 1126125N. F.M. was partially supported by the FWO grants G0F5921N (Odysseus) and G023721N, and the iBOF/23/064 grant from KU Leuven.

\section{Preliminaries}\label{sec 2}

\begin{notation}\label{notation 1}
We work over the complex field $\CC$. 
We denote the set $\{1, \ldots, n\}$ by $[n]$ and the collection of all its $r$-subsets by $\textstyle\binom{[n]}{r}$ for any $r\leq n$. For an $r \times n$ matrix $X = (x_{i,j})$ of indeterminates, $\CC[X]$ refers to the polynomial ring on the variables $x_{i,j}$. For a given pair of subsets $A \subseteq [r]$ and $B \subseteq [n]$ with $\lvert A \rvert = \lvert B \rvert$, $[A|B]_X$ denotes the minor of $X$ with rows indexed by $A$ and columns indexed by $B$. For convenience, we adopt the convention that the empty set is considered irreducible in the Zariski topology throughout this note.
\end{notation}

\subsection{Matroids}

We will outline some theoretical results related to matroids. While there are many equivalent definitions of matroids, each useful in different contexts, we will present only one here. For foundational concepts regarding matroids, we refer the reader to \cite{Oxley, piff1970vector}.

\begin{definition}\normalfont
A matroid can be characterized by its {\em circuits}. Specifically, a matroid $M$ consists of a ground set $[n]$ and a collection $\mathcal{C}$ of subsets of $[n]$, called
circuits, that satisfy the following axioms: 
\begin{itemize}
\item $\emptyset \not\in\mathcal{C}$.
\item If $C_1,C_2 \in \mathcal{C}$ and $C_1\subseteq C_2$, then $C_1=C_2$.
\item  
 If $e\in C_1\cap C_2$ with $C_{1}\neq C_{2}\in \mathcal{C}$, there exists $C_3\in\mathcal{C}$ such that $C_3\subseteq (C_1\cup C_2)-e$. 
\end{itemize}
\end{definition}

We will require the following notions. For a comprehensive source that has the necessary background about matroids, we refer the reader to \cite{Oxley}.

\begin{definition}\normalfont
Let $M$ be a matroid on the ground set $[n]$.
\begin{itemize}
\item A subset of $[n]$ that contains a circuit is called {\em dependent}, while any subset that does not contain a circuit is called {\em independent}. The collection of all dependent sets of $M$ is denoted by $\mathcal{D}(M)$. 
\item A {\em basis} is a maximal independent subset of $[n]$, and the set of all bases is denoted by $\mathcal{B}(M)$.
\item The collection of circuits of $M$ is denoted by $\mathcal{C}(M)$.
\item For any $F\subset [n]$, the {\em rank} of $F$, denoted $\rank(F)$, is defined as the size of the largest independent set contained in $F$. Observe that $\rank([n])$ is the size of any basis, which we denote as $\rank(M)$.
\item For a subset $F\subset [n]$, we define its {\em closure} $\closure{F}$ as the set $\{x\in [n]:\rank(F\cup \{x\})=\rank(F)\}$. A set is called a {\em flat} if it is equal to its own closure.
\item An element $x\in [n]$ satisfying $\rank(\{x\})=0$ is called a {\em loop}.
\item An element $x\in [n]$ that does not belong to any circuit of $M$ is called a {\em coloop}.
\item Two elements $x,y\in [n]$ are called {\em parallel} if $\rank(\{x,y\})=1$.
\end{itemize}
\end{definition}

Given a matroid $M$ on $[n]$, we define two fundamental operations: restriction and deletion.
\begin{definition}
\normalfont \label{subm}
For any subset $S \subseteq [n]$, the {\em restriction} of $M$ to $S$ is the matroid 
on $S$ whose rank function is the restriction of the rank function of $M$ to $S$. This matroid is referred to as the restriction of $M$ to $S$. 
Unless otherwise specified, we assume that subsets of $[d]$ possess this structure and refer to them as \textit{submatroids} of $M$. This submatroid is denoted by $M|S$, or simply $S$ when the context is clear. The {\em deletion} of $S$, is denoted by $M\setminus S$, which corresponds to $M|([n]\setminus S)$.
\end{definition}

\begin{definition}\normalfont\label{general}
Let $M$ be a matroid of rank $r$ on $[n]$. We refer to the elements in the ground set of 
$M$ as points. 
We consider the following equivalence relation on  circuits of $M$ of size less than $r+1$: 
$$C_{1}\sim C_{2} \Longleftrightarrow \closure{C_{1}}=\closure{C_{2}}.$$
We fix the following notation:
\begin{itemize}
\item We say that an equivalence class $l$ is a \textit{$k$-subspace} if $\rank(C) = k$ for any circuit $C \in l$. In this case, we denote the rank of $l$ by $\rank(l) = k$. We denote by $\Li_{M}$ the set of all \textit{subspaces} of $M$.

\item We say that $p \in [n]$ belongs to the \textit{subspace} $l$ if there exists a circuit $C \in l$ such that $p \in C$. For $p \in [n]$, let $\Li_{p}$ denote the set of all \textit{subspaces} of $M$ that contain the point $p$. We then define the \textit{degree} of $p$ as the size of $\Li_{p}$.

\item Let $\gamma$ be a collection of vectors of $\CC^{r}$ indexed by $[n]$. We denote by $\gamma_{p}$ the vector in $\gamma$ corresponding to the point $p$. For 
$l \in \Li_{M}$, we denote by $\gamma_{l}$ the subspace spanned by $\{\gamma_{p} \mid p \in l\}$.

 \item A vector $q \in \mathbb{C}^{r}$ is called \textit{outside} $\gamma$ if it does not belong to any subspace $\gamma_{l}$ for any $l \in \Li_{M}$.

\end{itemize}
For ease of notation, when it is clear, we drop the index $M$ and 
simply write $\Li$. 
\end{definition}

We fix the following notation that we will use throughout the paper.

\begin{notation}\label{initial notation}
Given a matroid $M$ of rank $r$ on the ground set $[n]$, we associate an $r\times n$ matrix $X$ of indeterminates, where the columns are indexed by the elements of $[n]$. For any subset $\{p_{1},\ldots,p_{k}\}\subset [n]$ with $k\leq r$, and vectors $v_{1},\ldots,v_{r-k}\in \CC^{r}$, 
we define the expression $[p_{1},\ldots,p_{k},v_{1},\ldots,v_{r-k}]\in \CC[X]$ as the determinant of the matrix formed by taking the columns of $X$ corresponding to $p_{1},\ldots,p_{k}$ 
and appending the vectors $v_{1},\ldots,v_{r-k}$ as additional columns. Note that $[p_{1},\ldots,p_{k},v_{1},\ldots,v_{r-k}]$ is a polynomial in $\CC[X]$, and not a scalar number.
\end{notation}

One particular family of matroids we will study in detail is the family of paving matroids~\cite{Oxley}.

\begin{definition}
\normalfont 
\label{pav}
A matroid $M$ of rank $r$ is called a \textit{paving matroid} if every circuit of $M$ has size either $r$ or $r+1$. In this case, we refer to it as an $r$-paving matroid. For paving matroids, the set of subspaces $\Li_{M}$, as described in Definition~\ref{general}, coincides with the set of {\em dependent hyperplanes} of $M$, which are maximal subsets of points of size at least $r$ in which every subset of $r$ elements forms a circuit. These subspaces are called dependent hyperplanes.

For $r=3$, a paving matroid is simply a rank-three simple matroid. Throughout this note, we assume all rank-three matroids are simple, referring to their ground set elements as \emph{points} and their dependent hyperplanes as \emph{lines}.
\end{definition}

\begin{example}
\label{three lines}
The configuration depicted in Figure~\ref{fig:combined} (Left) represents a rank-three matroid $M$ with points $[7]$ and lines $\Li_{M} = \{\{7,1,2\}, \{7,3,4\}, \{7,5,6\}\}$. 
\end{example}

\subsection{Realization spaces of matroids and matroid  varieties}
\label{sec reali}

We begin by recalling the realization spaces of matroids, followed by circuit and matroid varieties.

\begin{definition}
Let $M$ be a matroid of rank $r$ on the ground set $[n]$.
A realization of $M$ 
is a collection of vectors $\gamma=\{\gamma_{1},\ldots,\gamma_{n}\}\subset \CC^{r}$ such that
\[\{\gamma_{i_{1}},\ldots,\gamma_{i_{p}}\}\ \text{is linearly dependent} \Longleftrightarrow \{i_{1},\ldots,i_{p}\} \ \text{is a dependent set of $M$.}\]
The realization space of $M$ 
is defined as $\Gamma_{M}=\{\gamma \subset \CC^{r}: \gamma \ \text{is a realization of $M$}\}$.
Each element of $\Gamma_{M}$ can be represented as an $r\times n$ matrix over $\CC$. The matroid variety $V_{M}$ of $M$ is defined as the Zariski closure in $\CC^{rn}$ of $\Gamma_{M}$. The  associated ideal $I_{M}=I\pare{V_{M}}$ is called the matroid ideal. 
\end{definition}


\begin{definition}\normalfont\label{cir}
Let $M$ be a matroid of rank $r$ on $[n]$. 
Consider the $r\times n$ matrix $X=\pare{x_{i,j}}$ of indeterminates. We define the \textit{circuit ideal} of $M$ as:
$$ I_{\mathcal{C}(M)} = \{ [A|B]_X:\ B \in \mathcal{C}(M),\ A \subseteq [r],\ \text{and}\ |A| = |B| \}. $$
Note that the polynomials in $I_{\mathcal{C}(M)}$ vanish in any realization of $M$, which implies that $I_{\mathcal{C}(M)} \subseteq I_{M}$. 
We say that $\gamma$, a collection of vectors of $\CC^{r}$ indexed by $[n]$, includes the dependencies of $M$ if it satisfies:
\[\set{i_{1},\ldots,i_{k}}\  \text{is a dependent set of $M$} \Longrightarrow \set{\gamma_{i_{1}},\ldots,\gamma_{i_{k}}}\ \text{is linearly dependent}. \] 
The {\em circuit variety} is $V_{\mathcal{C}(M)}=V(I_{\mathcal{C}(M)})=\{\gamma:\text{$\gamma$ includes the dependencies of $M$}\}$.
\end{definition}

We conclude this subsection by highlighting the relation between the study of the realization spaces of matroids with those of hyperplane arrangements.

\begin{remark}\label{arrangemnet}
There exists a natural correspondence between the realization spaces of matroids and those of hyperplane arrangements. Each vector collection realizing a matroid $M$ corresponds to a hyperplane arrangement in the dual vector space; see e.g.~\cite{bokowski1989computational,richter1999universality,Emiliano-Fatemeh}. 
Consequently, for any realizable matroid $M$, there exists a correspondence between the realization space of $M$ and the realization space of any hyperplane arrangement having $M$ as its associated matroid. Under this correspondence, the realization spaces of line arrangements correspond with those of rank-three matroids.
\end{remark}

\subsection{Grassmann-Cayley algebra}\normalfont

We now review the notion of Grassmann-Cayley algebra from \textup{\cite{computationalgorithms}}. The Grassmann-Cayley algebra is the exterior algebra $\bigwedge (\mathbb{C}^{d})$, equipped with two operations: the \textit{join} and the \textit{meet}. The join, or extensor, of vectors $v_{1}, \ldots, v_{k}$ is denoted by $v_{1}  \cdots v_{k}$. The meet operation, denoted by $\wedge$, is defined for two extensors $v = v_{1} \cdots v_{k}$ and $w = w_{1} \cdots w_{j}$, with lengths $k$ and $j$ respectively, where $j + k \geq d$, as:
$$v\wedge w=\sum_{\sigma\in \mathcal{S}(k,j,d)}
\corch{v_{\sigma(1)}\cdots v_{\sigma(d-j)}w_{1}\cdots w_{j}}\cdot v_{\sigma(d-j+1)}\cdots v_{\sigma(k)}$$
where $ \mathcal{S}(k,j,d)$ denotes the set of all permutations of $\corch{k}$ that satisfy $\sigma(1)<\cdots <\sigma(d-j)$ and $\sigma(d-j+1)<\cdots <\sigma(d)$. When $j + k < d$, the meet is defined as $0$. There is a correspondence between the extensor $v = v_{1} \cdots v_{k}$ and the subspace $\overline{v} = \langle v_{1}, \ldots, v_{k} \rangle$ that they generate. This correspondence satisfies the following properties:

\begin{lemma}\label{klj}

Let $v=v_{1}\cdots v_{k}$ and $w=w_{1}\cdots w_{j}$ be two extensors with $j+k\geq d$. Then we have:
\begin{itemize}
\item The extensor $v$ is equal to zero if and only if the vectors $v_{1},\ldots,v_{k}$ are linearly dependent.
\item Any extensor $v$ is uniquely determined by $\overline{v}$ up to a scalar multiple.
\item The meet of two extensors is again an extensor.
\item We have that $v\wedge w\neq 0$ if and only if $\ip{\overline{v}}{\overline{w}}=\CC^{d}$. In this case, we have $\overline{v}\cap\overline{w}=\overline{v\wedge w}.$
\end{itemize}
\end{lemma}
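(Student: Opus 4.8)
All four bullets are classical facts about the Grassmann–Cayley algebra, and my plan is to reduce each to elementary multilinear algebra by working with a basis adapted to the subspaces involved, using throughout that the join $v_{1}\cdots v_{k}$ is nothing but the exterior product $v_{1}\wedge\cdots\wedge v_{k}$ in $\bigwedge(\CC^{d})$. For the first bullet I would argue in the standard way: if $v_{1},\dots,v_{k}$ are linearly dependent, express one of them through the others and expand $v_{1}\wedge\cdots\wedge v_{k}$ by multilinearity; every summand repeats a vector and so vanishes by antisymmetry, giving $v=0$. Conversely, if the $v_{i}$ are independent I would extend them to a basis of $\CC^{d}$, so that $v_{1}\wedge\cdots\wedge v_{k}$ is a basis vector of $\bigwedge^{k}(\CC^{d})$ relative to that basis, hence nonzero. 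For the second bullet, given nonzero extensors $v=v_{1}\cdots v_{k}$ and $w=w_{1}\cdots w_{k}$ with $\overline{v}=\overline{w}$, the tuples $(v_{i})$ and $(w_{i})$ are two bases of the same $k$-dimensional space, so $v_{i}=\sum_{j}A_{ij}w_{j}$ for an invertible matrix $A$; multilinearity and antisymmetry then yield $v_{1}\wedge\cdots\wedge v_{k}=\det(A)\,(w_{1}\wedge\cdots\wedge w_{k})$, so $v$ and $w$ are proportional, and the converse is immediate.

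For the last two bullets I would fix $v$ of length $k$ and $w$ of length $j$ with $j+k\ge d$ and split into two cases. If $\overline{v}+\overline{w}\ne\CC^{d}$, then all the $v_{i}$ and $w_{i}$ lie in a common hyperplane $H$, so every bracket $\corch{v_{\sigma(1)}\cdots v_{\sigma(d-j)}w_{1}\cdots w_{j}}$ occurring in the meet formula is the determinant of $d$ vectors of $H$, hence $0$; thus $v\wedge w=0$, which is consistent with the asserted equivalence. If $\overline{v}+\overline{w}=\CC^{d}$, I would set $m=\dim(\overline{v}\cap\overline{w})=k+j-d$ and choose a basis $e_{1},\dots,e_{d}$ of $\CC^{d}$ with $\overline{v}=\is{e_{1},\dots,e_{k}}$, $\overline{w}=\is{e_{d-j+1},\dots,e_{d}}$, and $\overline{v}\cap\overline{w}=\is{e_{d-j+1},\dots,e_{k}}$ — possible exactly because the sum is direct modulo the intersection. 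Using the second bullet together with the bilinearity of the meet, I may compute with the representatives $v=e_{1}\cdots e_{k}$ and $w=e_{d-j+1}\cdots e_{d}$ at the cost of a nonzero scalar; in the shuffle formula a bracket $\corch{e_{\sigma(1)}\cdots e_{\sigma(d-j)}e_{d-j+1}\cdots e_{d}}$ is nonzero only when $\{\sigma(1),\dots,\sigma(d-j)\}=\{1,\dots,d-j\}$, i.e.\ for the single shuffle $\sigma=\Id$, in which case it is a nonzero scalar. Hence $v\wedge w$ is a nonzero scalar multiple of $e_{d-j+1}\cdots e_{k}$ (an empty product, i.e.\ a scalar, when $m=0$), which is an extensor with $\overline{v\wedge w}=\is{e_{d-j+1},\dots,e_{k}}=\overline{v}\cap\overline{w}$; this settles both remaining bullets at once, the claim that the meet is always an extensor also covering the degenerate value $0$ in the first case.

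The only point that requires genuine care is the bookkeeping in this last case: one must arrange the index blocks $\{1,\dots,d-j\}$, $\{d-j+1,\dots,k\}$, $\{k+1,\dots,d\}$ so that they correspond respectively to a complement of $\overline{v}\cap\overline{w}$ inside $\overline{v}$, to $\overline{v}\cap\overline{w}$ itself, and to a complement inside $\overline{w}$, and then check that exactly one shuffle survives in the meet formula. Everything else is routine, and I expect the paper either to carry out this computation directly or to cite \cite{computationalgorithms}.
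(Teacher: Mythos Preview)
Your proof is correct, and your anticipation at the end is exactly what happens: the paper does not prove this lemma at all but simply states it as part of a review of the Grassmann--Cayley algebra from \cite{computationalgorithms}. There is therefore nothing to compare against; your adapted-basis argument is a standard and complete justification of all four claims.
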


\begin{example}\label{gc3}
Consider the rank-three matroid depicted on the left of Figure~\ref{fig:combined}. Since, in any realization, the lines $\{12, 34, 56\}$ are concurrent, the point of intersection of $12$ and $34$ lies on $56$. Thus, the concurrency of the lines $\{12, 34, 56\}$ is equivalent to the vanishing of the following polynomial:
$$(34)\wedge (12)\vee 56=(\corch{312}4-\corch{412}3)\vee 56=\corch{123}\corch{456}-\corch{124}\corch{356}.$$
Therefore, we know that this polynomial is contained in the ideal of the matroid variety.
\end{example}

By repeatedly applying the last item of Lemma~\ref{klj}, we obtain the following result, 
demonstrating that for given subspaces whose dimensions meet the lemma's conditions, their Grassmann-Cayley algebra expression is non-vanishing if and only if their intersection has the \textit{expected dimension}.
\begin{lemma}[Expected dimension]\label{kñl}
Let $v_{1},\ldots,v_{k}$ be extensors associated to subspaces $V_{1},\ldots,V_{k}$ of $\CC^{d}$ of dimensions $a_{1},\ldots,a_{k}$ respectively. Suppose that $\textstyle \sum_{i=1}^{k}a_{i}-d(k-1)\geq 0$. Then, we have that:
\[ \dim(\bigcap_{i=1}^{k}V_{i})=\sum_{i=1}^{k}a_{i}-d(k-1)\quad\text{if and only if}\quad \bigwedge_{i=1}^{k}v_{i}\neq 0.\] 
\end{lemma}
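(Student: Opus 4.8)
The plan is to induct on $k$, using the last item of Lemma~\ref{klj} as the base case ($k=2$) and as the engine of the inductive step. For $k=2$ the hypothesis $a_1 + a_2 - n \geq 0$ means $\langle V_1, V_2\rangle = \CC^n$ is equivalent (by the dimension formula $\dim(V_1 \cap V_2) = a_1 + a_2 - \dim\langle V_1,V_2\rangle$) to $\dim(V_1 \cap V_2) = a_1 + a_2 - n$; and by Lemma~\ref{klj} this equality of spans is exactly the statement $v_1 \wedge v_2 \neq 0$. So the base case is immediate.

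For the inductive step, suppose the result holds for $k-1$ extensors. Write $W = \bigcap_{i=1}^{k-1} V_i$ and let $w = \bigwedge_{i=1}^{k-1} v_i$. The key point is that the partial meets are well-defined: since $a_i \geq 1$ for all $i$ and $\sum_{i=1}^k a_i - n(k-1) \geq 0$, one checks that every partial sum $\sum_{i=1}^{j} a_i - n(j-1)$ stays nonnegative, so all intermediate meets have summed lengths $\geq n$ and Lemma~\ref{klj} applies at each stage; in particular each partial meet is again an extensor. Now I would split into cases. If $\bigwedge_{i=1}^{k-1} v_i = 0$, then on one hand $\bigwedge_{i=1}^k v_i = 0$, and on the other hand by the inductive hypothesis $\dim W < \sum_{i=1}^{k-1} a_i - n(k-2)$, from which $\dim(W \cap V_k) \leq \dim W + \dim V_k - n < \sum_{i=1}^k a_i - n(k-1)$ (the intersection dimension is at most the "expected" one), so both sides of the biconditional fail — note the intersection can never exceed the expected value here because the expected value is $\geq 0$. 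If instead $w = \bigwedge_{i=1}^{k-1} v_i \neq 0$, then by the inductive hypothesis $\overline{w} = W$ has dimension exactly $b := \sum_{i=1}^{k-1} a_i - n(k-2)$, and since $b + a_k - n = \sum_{i=1}^k a_i - n(k-1) \geq 0$ we are back in the $k=2$ situation applied to $W$ and $V_k$: by Lemma~\ref{klj}, $\bigwedge_{i=1}^k v_i = w \wedge v_k \neq 0$ iff $\langle W, V_k \rangle = \CC^n$ iff $\dim(W \cap V_k) = b + a_k - n = \sum_{i=1}^k a_i - n(k-1)$, and since $W \cap V_k = \bigcap_{i=1}^k V_i$ this is exactly what we want.

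The main obstacle, and the step I would be most careful about, is the bookkeeping of the partial meets: verifying that the nonnegativity hypothesis $\sum a_i - n(k-1) \geq 0$ together with $a_i \geq 1$ guarantees that all intermediate sums $\sum_{i=1}^j a_i - n(j-1)$ are nonnegative (so Lemma~\ref{klj} is legitimately applicable to form $w \wedge v_k$ and recognize it as an extensor), and the observation that whenever the expected dimension is nonnegative, the actual intersection dimension is bounded above by it — so that "failure of the dimension equality" always means "actual dimension strictly less than expected." Once those two routine facts are in place the induction closes cleanly. I would also remark that the statement of Lemma~\ref{klj} used here (meet of extensors is an extensor, and $v \wedge w \neq 0 \iff \langle \overline v, \overline w\rangle = \CC^d$, with $\overline{v \wedge w} = \overline v \cap \overline w$) is precisely tailored to make this induction go through.
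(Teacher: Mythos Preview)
Your overall strategy---induct on $k$ using the last item of Lemma~\ref{klj}---is exactly what the paper intends (its proof is the single sentence ``by repeatedly applying the last item of Lemma~\ref{klj}''). However, two of the details you supply are wrong, and one of them matters.

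First, the minor point: the reason the partial sums $b_j=\sum_{i\le j}a_i-n(j-1)$ stay nonnegative is that $a_i\le n$ (so $b_{j+1}-b_j=a_{j+1}-n\le 0$, hence $b_j\ge b_k\ge 0$), not that $a_i\ge 1$. The latter hypothesis gives you nothing here.

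Second, and more importantly: in the case $w=\bigwedge_{i<k}v_i=0$ you have the inequality direction reversed throughout. The intersection dimension always satisfies
\[
\dim\Big(\bigcap_{i\in L}V_i\Big)\ \ge\ \sum_{i\in L}a_i-n(|L|-1),
\]
not $\le$; this is the standard ``codimension is subadditive'' bound, proved by iterating $\dim(A\cap B)=\dim A+\dim B-\dim(A+B)\ge \dim A+\dim B-n$. So when the inductive hypothesis tells you $\dim W\neq b:=\sum_{i<k}a_i-n(k-2)$, you must conclude $\dim W>b$, and then
\[
\dim\Big(\bigcap_{i\le k}V_i\Big)=\dim(W\cap V_k)\ \ge\ \dim W+a_k-n\ >\ b+a_k-n=\sum_{i\le k}a_i-n(k-1).
\]
Thus both sides of the biconditional fail, as you wanted---but for the opposite reason you stated. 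Your sentences ``the intersection dimension is at most the expected one'' and ``the intersection can never exceed the expected value'' are simply false; fixing the signs makes the argument go through cleanly.
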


\section{Nilpotent matroids}\label{sec 4}

In this section, in Definition~\ref{k2}, we introduce a new family of matroids called \textit{nilpotent}. We prove, in Theorem~\ref{nil rea}, that the matroid varieties of nilpotent matroids are realizable and irreducible. Additionally, we study the defining equations of their matroid varieties.
Moreover, in Theorem~\ref{gc 3}, we provide a complete set of defining equations for forest 
configurations.

\subsection{Definition and basic properties} 
We now introduce the family of \textit{nilpotent matroids} and examine their basic properties. Recall the invariants from Definition~\ref{general}, particularly 
\(\Li\), representing the sets of subspaces.

\begin{notation}\label{k1}
Let $M$ be a matroid on $[n]$. We denote the set of points $\{p \in [n] \mid \lvert \Li_p \rvert > 1\}$ by $S_M$. For each $l \in \Li$, we denote $S_M \cap l$ by $S_l$.
(Note that, $S_{N}\subset S_{M}$ for every submatroid $N\subset M$.)
\end{notation}

\begin{definition}[Nilpotent matroid]\label{k2}
Let $M$ be a matroid. We define the \textit{nilpotent chain} of $M$ as the following chain of submatroids of $M$: 
$$M_{0}=M,\quad M_{1}=S_{M},\quad \text{and }\ M_{j+1}=S_{M_{j}} \ \text{ for every $j\geq 1$.}$$
We call $M$ \textit{nilpotent} if $M_{j}=\emptyset$ for some $j$. In this case, we denote the length of the chain by $l_{n}(M)$.
\end{definition}

In the preceding definition, we identify each subset of points $M_{j}$ with the submatroid it defines.

\begin{example}
The two 
rank-three matroids on the right in Figure~\ref{fig:combined} are not nilpotent because all their points have a degree greater than one. In contrast, the matroid on the left is nilpotent, with a nilpotent chain of length two.
\end{example}

\begin{figure}[h]
    \centering
    \begin{subfigure}[b]{0.3\textwidth}
        \centering
        \begin{tikzpicture}[x=0.75pt,y=0.75pt,yscale=-1,xscale=1]

\tikzset{every picture/.style={line width=0.75pt}} 

\draw    (81.69,116.61) -- (191.16,174.3) ;
\draw    (77,131.88) -- (224,131.88) ;
\draw    (80.13,150.55) -- (191.16,79.27) ;
\draw [fill={rgb, 300:red, 0; green, 0; blue, 180}, fill opacity=1]
(107.34,131.2) .. controls (107.34,133.07) and (108.63,134.58) .. (110.23,134.58) .. controls (111.83,134.58) and (113.12,133.07) .. (113.12,131.2) .. controls (113.12,129.33) and (111.83,127.81) .. (110.23,127.81) .. controls (108.63,127.81) and (107.34,129.33) .. (107.34,131.2) -- cycle ;
\draw [fill={rgb, 300:red, 0; green, 0; blue, 180}, fill opacity=1]
(142.62,149.93) .. controls (142.62,151.8) and (143.91,153.32) .. (145.51,153.32) .. controls (147.11,153.32) and (148.4,151.8) .. (148.4,149.93) .. controls (148.4,148.06) and (147.11,146.55) .. (145.51,146.55) .. controls (143.91,146.55) and (142.62,148.06) .. (142.62,149.93) -- cycle ;
\draw [fill={rgb, 300:red, 0; green, 0; blue, 180}, fill opacity=1]
(173.7,166.79) .. controls (173.7,168.66) and (174.99,170.18) .. (176.59,170.18) .. controls (178.19,170.18) and (179.48,168.66) .. (179.48,166.79) .. controls (179.48,164.92) and (178.19,163.41) .. (176.59,163.41) .. controls (174.99,163.41) and (173.7,164.92) .. (173.7,166.79) -- cycle ;
\draw [fill={rgb, 300:red, 0; green, 0; blue, 180}, fill opacity=1] (201.42,131.2) .. controls (201.42,133.07) and (202.71,134.58) .. (204.31,134.58) .. controls (205.91,134.58) and (207.2,133.07) .. (207.2,131.2) .. controls (207.2,129.33) and (205.91,127.81) .. (204.31,127.81) .. controls (202.71,127.81) and (201.42,129.33) .. (201.42,131.2) -- cycle ;
\draw [fill={rgb, 300:red, 0; green, 0; blue, 180}, fill opacity=1] (167.82,131.2) .. controls (167.82,133.07) and (169.11,134.58) .. (170.71,134.58) .. controls (172.31,134.58) and (173.6,133.07) .. (173.6,131.2) .. controls (173.6,129.33) and (172.31,127.81) .. (170.71,127.81) .. controls (169.11,127.81) and (167.82,129.33) .. (167.82,131.2) -- cycle ;
\draw [fill={rgb, 300:red, 0; green, 0; blue, 180}, fill opacity=1] (177.06,87.18) .. controls (177.06,89.05) and (178.35,90.56) .. (179.95,90.56) .. controls (181.55,90.56) and (182.84,89.05) .. (182.84,87.18) .. controls (182.84,85.31) and (181.55,83.79) .. (179.95,83.79) .. controls (178.35,83.79) and (177.06,85.31) .. (177.06,87.18) -- cycle ;
\draw [fill={rgb, 300:red, 0; green, 0; blue, 180}, fill opacity=1] (145.14,105.91) .. controls (145.14,107.78) and (146.43,109.3) .. (148.03,109.3) .. controls (149.63,109.3) and (150.92,107.78) .. (150.92,105.91) .. controls (150.92,104.04) and (149.63,102.52) .. (148.03,102.52) .. controls (146.43,102.52) and (145.14,104.04) .. (145.14,105.91) -- cycle ;

\draw (166.25,71.57) node [anchor=north west][inner sep=0.75pt]   [align=left] {{\scriptsize 1}};
\draw (206.28,115.51) node [anchor=north west][inner sep=0.75pt]   [align=left] {{\scriptsize 3}};
\draw (138.91,153.35) node [anchor=north west][inner sep=0.75pt]   [align=left] {{\scriptsize 6}};
\draw (170.08,173.62) node [anchor=north west][inner sep=0.75pt]   [align=left] {{\scriptsize 5}};
\draw (173.44,116.45) node [anchor=north west][inner sep=0.75pt]   [align=left] {{\scriptsize 4}};
\draw (106.8,111.36) node [anchor=north west][inner sep=0.75pt]   [align=left] {{\scriptsize 7}};
\draw (136.66,90.59) node [anchor=north west][inner sep=0.75pt]   [align=left] {{\scriptsize 2}};

   \end{tikzpicture}
        \label{fig:quadrilateral}
    \end{subfigure}
    \hfill
    \begin{subfigure}[b]{0.3\textwidth}
        \centering

\tikzset{every picture/.style={line width=0.75pt}} 

\begin{tikzpicture}[x=0.75pt,y=0.75pt,yscale=-1,xscale=1]

\draw [line width=0.75]    (247.65,123.92+60) -- (201.96,207.53+60) ;
\draw [line width=0.75]    (247.65,123.92+60) -- (291.68,207.53+60) ;
\draw [line width=0.75]    (219.23,174.98+60) -- (291.68,207.53+60) ;
\draw [line width=0.75]    (274.41,174.98+60) -- (201.96,207.53+60) ;
\draw [fill={rgb, 300:red, 0; green, 0; blue, 180}, fill opacity=1] (244.87,123.92+60) .. controls (244.87,125.68+60) and (246.12,127.11+60) .. (247.65,127.11+60) .. controls (249.19,127.11+60) and (250.44,125.68+60) .. (250.44,123.92+60) .. controls (250.44,122.15+60) and (249.19,120.73+60) .. (247.65,120.73+60) .. controls (246.12,120.73+60) and (244.87,122.15+60) .. (244.87,123.92+60) -- cycle ;
\draw [fill={rgb, 300:red, 0; green, 0; blue, 180}, fill opacity=1] (244.31,187.1+60) .. controls (244.31,188.87+60) and (245.56,190.29+60) .. (247.1,190.29+60) .. controls (248.64,190.29+60) and (249.88,188.87+60) .. (249.88,187.1+60) .. controls (249.88,185.34+60) and (248.64,183.91+60) .. (247.1,183.91+60) .. controls (245.56,183.91+60) and (244.31,185.34+60) .. (244.31,187.1+60) -- cycle ;
\draw [fill={rgb, 300:red, 0; green, 0; blue, 180}, fill opacity=1] (216.45,174.98+60) .. controls (216.45,176.74+60) and (217.69,178.17+60) .. (219.23,178.17+60) .. controls (220.77,178.17+60) and (222.02,176.74+60) .. (222.02,174.98+60) .. controls (222.02,173.21+60) and (220.77,171.79+60) .. (219.23,171.79+60) .. controls (217.69,171.79+60) and (216.45,173.21+60) .. (216.45,174.98+60) -- cycle ;
\draw [fill={rgb, 300:red, 0; green, 0; blue, 180}, fill opacity=1] (271.62,174.98+60) .. controls (271.62,176.74+60) and (272.87,178.17+60) .. (274.41,178.17+60) .. controls (275.94,178.17+60) and (277.19,176.74+60) .. (277.19,174.98+60) .. controls (277.19,173.21+60) and (275.94,171.79+60) .. (274.41,171.79+60) .. controls (272.87,171.79+60) and (271.62,173.21+60) .. (271.62,174.98+60) -- cycle ;
\draw [fill={rgb, 300:red, 0; green, 0; blue, 180}, fill opacity=1] (288.9,207.53+60) .. controls (288.9,209.29+60) and (290.14,210.72+60) .. (291.68,210.72+60) .. controls (293.22,210.72+60) and (294.47,209.29+60) .. (294.47,207.53+60) .. controls (294.47,205.77+60) and (293.22,204.34+60) .. (291.68,204.34+60) .. controls (290.14,204.34+60) and (288.9,205.77+60) .. (288.9,207.53+60) -- cycle ;
\draw [fill={rgb, 300:red, 0; green, 0; blue, 180}, fill opacity=1] (199.17,207.53+60) .. controls (199.17,209.29+60) and (200.42,210.72+60) .. (201.96,210.72+60) .. controls (203.49,210.72+60) and (204.74,209.29+60) .. (204.74,207.53+60) .. controls (204.74,205.77+60) and (203.49,204.34+60) .. (201.96,204.34+60) .. controls (200.42,204.34+60) and (199.17,205.77+60) .. (199.17,207.53+60) -- cycle ;

\draw (243.29,107.07+60) node [anchor=north west][inner sep=0.75pt]   [align=left] {{\scriptsize 1}};
\draw (208.77,165.67+60) node [anchor=north west][inner sep=0.75pt]   [align=left] {{\scriptsize 2}};
\draw (192.94,205.77+60) node [anchor=north west][inner sep=0.75pt]   [align=left] {{\scriptsize 3}};
\draw (241.88,169.35+60) node [anchor=north west][inner sep=0.75pt]   [align=left] {{\scriptsize 4}};
\draw (277.98,164.77+60) node [anchor=north west][inner sep=0.75pt]   [align=left] {{\scriptsize 5}};
\draw (292.84,206.21+60) node [anchor=north west][inner sep=0.75pt]   [align=left] {{\scriptsize 6}};

\end{tikzpicture}
        \label{fig:pascal}
    \end{subfigure}
    \hfill
    \begin{subfigure}[b]{0.3\textwidth}
        \centering

\tikzset{every picture/.style={line width=0.75pt}} 

\begin{tikzpicture}[x=0.75pt,y=0.75pt,yscale=-1,xscale=1]

\draw  [dash pattern={on 4.5pt off 4.5pt}] (171,191.4) .. controls (171,167.83) and (209.5,148.72) .. (257,148.72) .. controls (304.5,148.72) and (343,167.83) .. (343,191.4) .. controls (343,214.98) and (304.5,234.09) .. (257,234.09) .. controls (209.5,234.09) and (171,214.98) .. (171,191.4) -- cycle ;
\draw    (193.78,161.68) -- (247.97,234.09) ;
\draw    (190.63,182.26) -- (313.94,208.17) ;
\draw    (302.95,154.82) -- (247.97,234.09) ;
\draw    (250.32,148.72) -- (185.92,215.03) ;
\draw    (193.78,161.68) -- (285.67,232.56) ;
\draw    (302.95,154.82) -- (185.92,215.03) ;
\draw    (250.32,148.72) -- (285.67,232.56) ;
\draw [fill={rgb, 300:red, 0; green, 0; blue, 180}, fill opacity=1] (191.42,161.68) .. controls (191.42,162.94) and (192.47,163.96) .. (193.78,163.96) .. controls (195.08,163.96) and (196.13,162.94) .. (196.13,161.68) .. controls (196.13,160.41) and (195.08,159.39) .. (193.78,159.39) .. controls (192.47,159.39) and (191.42,160.41) .. (191.42,161.68) -- cycle ;
\draw [fill={rgb, 300:red, 0; green, 0; blue, 180}, fill opacity=1] (229.9,191.4) .. controls (229.9,192.67) and (230.96,193.69) .. (232.26,193.69) .. controls (233.56,193.69) and (234.62,192.67) .. (234.62,191.4) .. controls (234.62,190.14) and (233.56,189.12) .. (232.26,189.12) .. controls (230.96,189.12) and (229.9,190.14) .. (229.9,191.4) -- cycle ;
\draw [fill={rgb, 300:red, 0; green, 0; blue, 180}, fill opacity=1] (283.31,232.56) .. controls (283.31,233.82) and (284.37,234.85) .. (285.67,234.85) .. controls (286.97,234.85) and (288.02,233.82) .. (288.02,232.56) .. controls (288.02,231.3) and (286.97,230.27) .. (285.67,230.27) .. controls (284.37,230.27) and (283.31,231.3) .. (283.31,232.56) -- cycle ;
\draw [fill={rgb, 300:red, 0; green, 0; blue, 180}, fill opacity=1] (245.61,234.09) .. controls (245.61,235.35) and (246.67,236.37) .. (247.97,236.37) .. controls (249.27,236.37) and (250.32,235.35) .. (250.32,234.09) .. controls (250.32,232.82) and (249.27,231.8) .. (247.97,231.8) .. controls (246.67,231.8) and (245.61,232.82) .. (245.61,234.09) -- cycle ;
\draw [fill={rgb, 300:red, 0; green, 0; blue, 180}, fill opacity=1] (210.27,186.83) .. controls (210.27,188.09) and (211.32,189.12) .. (212.63,189.12) .. controls (213.93,189.12) and (214.98,188.09) .. (214.98,186.83) .. controls (214.98,185.57) and (213.93,184.54) .. (212.63,184.54) .. controls (211.32,184.54) and (210.27,185.57) .. (210.27,186.83) -- cycle ;
\draw [fill={rgb, 300:red, 0; green, 0; blue, 180}, fill opacity=1] (183.57,215.03) .. controls (183.57,216.29) and (184.62,217.32) .. (185.92,217.32) .. controls (187.22,217.32) and (188.28,216.29) .. (188.28,215.03) .. controls (188.28,213.77) and (187.22,212.74) .. (185.92,212.74) .. controls (184.62,212.74) and (183.57,213.77) .. (183.57,215.03) -- cycle ;
\draw [fill={rgb, 300:red, 0; green, 0; blue, 180}, fill opacity=1] (300.59,154.82) .. controls (300.59,156.08) and (301.64,157.1) .. (302.95,157.1) .. controls (304.25,157.1) and (305.3,156.08) .. (305.3,154.82) .. controls (305.3,153.55) and (304.25,152.53) .. (302.95,152.53) .. controls (301.64,152.53) and (300.59,153.55) .. (300.59,154.82) -- cycle ;
\draw [fill={rgb, 300:red, 0; green, 0; blue, 180}, fill opacity=1] (247.97,148.72) .. controls (247.97,149.98) and (249.02,151.01) .. (250.32,151.01) .. controls (251.63,151.01) and (252.68,149.98) .. (252.68,148.72) .. controls (252.68,147.46) and (251.63,146.43) .. (250.32,146.43) .. controls (249.02,146.43) and (247.97,147.46) .. (247.97,148.72) -- cycle ;
\draw [fill={rgb, 300:red, 0; green, 0; blue, 180}, fill opacity=1] (269.17,199.02) .. controls (269.17,200.29) and (270.23,201.31) .. (271.53,201.31) .. controls (272.83,201.31) and (273.89,200.29) .. (273.89,199.02) .. controls (273.89,197.76) and (272.83,196.74) .. (271.53,196.74) .. controls (270.23,196.74) and (269.17,197.76) .. (269.17,199.02) -- cycle ;

\draw (183.96,150.51) node [anchor=north west][inner sep=0.75pt]   [align=left] {{\scriptsize 1}};
\draw (277.42,188.62) node [anchor=north west][inner sep=0.75pt]   [align=left] {{\scriptsize 8}};
\draw (226.37,193.19) node [anchor=north west][inner sep=0.75pt]   [align=left] {{\scriptsize 9}};
\draw (198.88,185.57) node [anchor=north west][inner sep=0.75pt]   [align=left] {{\scriptsize 7}};
\draw (291.55,228.25) node [anchor=north west][inner sep=0.75pt]   [align=left] {{\scriptsize 6}};
\draw (250.71,134.5) node [anchor=north west][inner sep=0.75pt]   [align=left] {{\scriptsize 5}};
\draw (176.1,210.72) node [anchor=north west][inner sep=0.75pt]   [align=left] {{\scriptsize 4}};
\draw (305.69,142.89) node [anchor=north west][inner sep=0.75pt]   [align=left] {{\scriptsize 3}};
\draw (248.36,234.35) node [anchor=north west][inner sep=0.75pt]   [align=left] {{\scriptsize 2}};

\end{tikzpicture}
        \label{fig:pencil}
    \end{subfigure}
    \caption{(Left) Three concurrent lines; (Center) Quadrilateral set; (Right) Pascal configuration.}
    \label{fig:combined}
\end{figure}

\vspace{-5mm}
\begin{lemma}\label{equiv 2}
Given a matroid $M$, the following statements hold:
\begin{itemize}
    \item[{\rm (i)}] $M$ is not nilpotent if and only if there exists a submatroid $\emptyset \subsetneq N \subset M$ with $S_{N}=N$.
    \item[{\rm (ii)}] Assume that $M$ is an $r$-paving matroid. Then $M$ is nilpotent if and only if every submatroid $N$ of full rank, with at least one dependent hyperplane, has a dependent hyperplane $l$ with $\size{S_{l}}\leq r-1$.
\end{itemize} 

\end{lemma}

\begin{proof}
(i) Suppose that there exists such a submatroid $N$. We will prove by induction that $N\subset M_{j}$ for every $j$. The base case is clear as $N\subset M=M_{0}$. For the inductive step suppose that $N\subset M_{j}$. Then $N=S_{N}\subset S_{M_{j}}=M_{j+1}$. This proves that $M$ is not nilpotent. Now suppose that $M$ is not nilpotent. Then the sequence 
\[M_0 \supset M_1 \supset M_2 \supset \cdots\]
stabilizes. Let $M_k$ be the subset at which this sequence stabilizes. Then $S_{M_k} = M_k$. Taking $N = M_k$, the claim follows.

(ii) Suppose that $M$ is not nilpotent. By (i), there exists a submatroid $N \neq \emptyset$ such that $N = S_N$, which implies that it has full rank. For this submatroid, there is no dependent hyperplane as described in the statement, because $\lvert S_l \rvert = \lvert l \rvert \geq r$ for every dependent hyperplane $l$ of $N$.

Now, suppose there exists a submatroid $N$ of full rank in which there is no such dependent hyperplane. 
Then, $\lvert S_l \rvert \geq r$ for every dependent hyperplane $l$ of $N$. Therefore, $N$ and $S_N$ have the same dependent hyperplanes. Consequently, we have $S_{S_N} = S_N$, hence $M$ is not nilpotent by (i).
\end{proof}

\begin{notation}\label{kb}
For an $r$-paving matroid $M$ on $[n]$ and a dependent hyperplane $l \in \Li$ with $\lvert S_l \rvert \leq r-1$, we use the notation $M - l$, with slight abuse of notation, to denote the submatroid $M\setminus (l\setminus S_{l})$. We refer to $M-l$ as the submatroid obtained by deleting the dependent hyperplane $l$. 
Note that $l$ is not a dependent hyperplane of $M - l$. 

For dependent hyperplanes $l_1, \ldots, l_k$, we denote by $M - \{l_1, \ldots, l_k\}$ the submatroid obtained by deleting these dependent hyperplanes in sequence. Here, $l_j$ is a dependent hyperplane of $M - \{l_1, \ldots, l_{j-1}\}$ for each $j \in [k]$.
\end{notation}

\begin{corollary}\label{rel}
Let $M$ be an $r$-paving matroid on $[n]$. Then $M$ is nilpotent if and only if there is a sequence of dependent hyperplanes $l_{1},\ldots,l_{k}$ such that:
\begin{itemize}
    \item $M-\set{l_{1},\ldots,l_{k}}$ does not contain any dependent hyperplane, and
    \item $\lvert{l_{j}\cap S_{M-\set{l_{1},\ldots,l_{j-1}}}\rvert} \leq r-1$ for every $j\in [k]$.
\end{itemize}

\end{corollary}

\begin{proof}
Suppose that $M$ is nilpotent. By Lemma~\ref{equiv 2}(ii), there exists a dependent hyperplane $l_1$ with $\lvert S_{l_1} \rvert \leq r-1$. If the submatroid $M - l_1$ contains at least two dependent hyperplanes, then it has full rank. Applying the same lemma, there exists a dependent hyperplane $l_2$ of $M - l_1$ with $\lvert S_{M - l_1} \cap l_2 \rvert \leq r-1$. Repeating the same process, we can find a sequence of dependent hyperplanes $l_1, \ldots, l_k$ such that $M - \{l_1, \ldots, l_{k-1}\}$ has at most one dependent hyperplane, and for each $j \in [k-1]$, we have
\[\lvert l_j \cap S_{M -\{l_1, \ldots, l_{j-1}\}} \rvert \leq r-1.\]
Since $M - \{l_1, \ldots, l_{k-1}\}$ has at most one dependent hyperplane, we have $S_{M -\{l_1, \ldots, l_{k-1}\}} = \emptyset$. If this submatroid has no dependent hyperplanes, then the sequence $l_1, \ldots, l_{k-1}$ satisfies the condition. If it does have a dependent hyperplane $l_k$, then the sequence $l_1, \ldots, l_k$ works as well.

Assume that a sequence of dependent hyperplanes $l_1, \ldots, l_k$ exists, and we need to prove that $M$ is nilpotent. We will use induction on the number of dependent hyperplanes.
As the base case, note that if $M$ has no dependent hyperplanes, it is trivially nilpotent. Now, assume the statement is true for any $r$-paving matroid with at most $m-1$ dependent hyperplanes. We need to prove it for an $r$-paving matroid $M$ with $m$ dependent hyperplanes.
By the assumption, the matroid $M - l_1$ has at most $m-1$ dependent hyperplanes. By the inductive hypothesis, $M - l_1$ is nilpotent. Since $S_M \subset ([n]  \setminus l_1)\cup S_{l_{1}}$, it follows that $S_M$ is nilpotent, implying that $M$ is nilpotent, as desired.
\end{proof}
 

We now recall the definition of a \textit{forest} 
configuration from \textup{\cite[Definition~5.1]{clarke2021matroid}}.

\begin{definition}\normalfont \label{forest}
A rank three matroid $M$ on $[n]$ is a \textit{forest} if there do not exist points $x_{1}, \ldots, x_{k} \in [n]$ and lines $l_{1}, \ldots, l_{k-1} \in \Li$ such that:
\begin{itemize}
\item $x_{1}=x_{k}$ and the points in $\{x_{1},\ldots,x_{k-1}\}$ are distinct.
\item $l_{j}\neq l_{j+1}$ for every $j\in [k-2]$ and $x_{j},x_{j+1}\in l_{j}$ for every $j\in [k-1]$. 
\end{itemize}
In other words, $M$ is a forest if there are no cycles.
\end{definition}

\begin{proposition}\label{fore}
A 
rank-three matroid $M$ is a forest if and only if there exists a sequence of lines $l_1, \ldots, l_k$ such that:
\begin{itemize}
    \item $M - \{l_1, \ldots, l_k\}$ contains no lines, and
    \item $\lvert l_j \cap S_{M - \{l_1, \ldots, l_{j-1}\}} \rvert \leq 1$ for every $j \in [k]$.
\end{itemize} 
\end{proposition}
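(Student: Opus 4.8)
The plan is to prove both directions by relating the combinatorial ``no cycles'' condition of Definition~\ref{forest} to the existence of the special sequence of lines, exploiting the fact that in a point-line configuration (rank $3$), a line $l$ has $\lvert l \cap S_{M'}\rvert \leq 1$ precisely when $l$ shares at most one ``branch point'' with the rest of the configuration. I will first record the elementary observation that for a point-line configuration, $S_{M}$ consists exactly of the points lying on at least two lines, so $l \cap S_{M'}$ is the set of points of $l$ that lie on some other line of the submatroid $M'$; a line $l$ with $\lvert l \cap S_{M'}\rvert \leq 1$ is thus a ``leaf'' of the configuration $M'$ in the graph-theoretic sense (it attaches to the rest via at most one vertex).

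For the forward direction, suppose $M$ is a forest. I would argue that $M$ must contain a line $l$ with $\lvert l \cap S_{M}\rvert \leq 1$: otherwise every line meets the rest of the configuration in at least two points of $S_M$, and a standard ``follow the path'' argument (start at any line, move to an adjacent line through a shared point of $S_M$, continue; since every line has a second such shared point one can always continue, and finiteness forces a repeated point/line, producing a cycle as in Definition~\ref{forest}) yields a cycle, a contradiction. Having found such an $l$, I delete it to form $M\setminus l$ (Notation~\ref{kb} applies since $\lvert S_l\rvert = \lvert l \cap S_M\rvert \leq 1 \leq n-1 = 2$); the deletion does not create any cycle, so $M\setminus l$ is again a forest with strictly fewer lines, and I induct on the number of lines to extract the full sequence $l_1,\dots,l_k$, terminating when no lines remain. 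I should check that $\lvert l_j \cap S_{M\setminus\{l_1,\dots,l_{j-1}\}}\rvert \leq 1$ is exactly the condition furnished at each stage, which it is by construction.

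For the converse, suppose such a sequence $l_1,\dots,l_k$ exists and, for contradiction, that $M$ has a cycle $x_1,\dots,x_k$, $l_1',\dots,l_{k-1}'$ (renaming to avoid clash with the hyperplane sequence). Consider the first line in the sequence $l_1,\dots,l_k$ that belongs to the cycle, say $l_i = l_t'$ for some edge $t$ of the cycle. The key point is that the two vertices $x_t, x_{t+1}$ of the cycle lying on $l_t'$ both lie on a second cycle-line, hence both lie on a line of $M$ distinct from $l_i$; since $l_1,\dots,l_{i-1}$ were all \emph{not} in the cycle, those second cycle-lines survive in $M\setminus\{l_1,\dots,l_{i-1}\}$, so both $x_t$ and $x_{t+1}$ lie in $S_{M\setminus\{l_1,\dots,l_{i-1}\}}$, forcing $\lvert l_i \cap S_{M\setminus\{l_1,\dots,l_{i-1}\}}\rvert \geq 2$, contradicting the hypothesis. (One must handle the degenerate short-cycle cases, e.g. two lines meeting in two points, but these only make the contradiction easier.) Hence $M$ is a forest.

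The main obstacle I anticipate is the bookkeeping in the converse: carefully tracking that deleting the earlier lines $l_1,\dots,l_{i-1}$ does not destroy the ``second line through $x_t$'' or ``second line through $x_{t+1}$'' that witnesses membership in $S_{M\setminus\{l_1,\dots,l_{i-1}\}}$ — this is exactly where minimality of $i$ (first cycle-line in the sequence) is used — together with a clean treatment of the degenerate cases (repeated vertices, length-$2$ ``cycles'' of two lines through two common points, and the interaction of $S_l$ with the $M\setminus l$ bookkeeping of Notation~\ref{kb}). The forward direction's ``follow the path to find a cycle'' argument is routine but also needs finiteness invoked correctly to guarantee the path closes up into a genuine cycle with distinct interior vertices as Definition~\ref{forest} demands.
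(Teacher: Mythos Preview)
Your proposal is correct and takes essentially the same approach as the paper: the forward direction is the same (the paper cites \textup{\cite[Lemma~5.3]{clarke2021matroid}} for the existence of a leaf line, which you argue directly via the path-following construction), and your converse is simply the unwound version of the paper's induction on the number of lines, whose inductive step amounts to observing that $l_1$ cannot be a cycle line (else $\lvert S_{l_1}\rvert\geq 2$) so any cycle of $M$ persists in $M\setminus l_1$. The degenerate case you flag (two lines meeting in two common points) cannot occur in a rank~$3$ paving matroid, so no extra care is needed there.
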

\begin{proof}
If $M$ is a forest 
configuration, then by \textup{\cite[Lemma~5.3]{clarke2021matroid}}, there exists a line $l \in \Li$ with $\lvert S_l \rvert \leq 1$. Consequently, for any forest, there exists a sequence of lines 
with the desired properties. 

We will now prove the converse, by induction on the number of lines. Suppose the statement holds for any 
rank-three matroid with at most $m-1$ lines. We will show that it also holds for a 
rank-three matroid $M$ with $m$ lines.
Removing a line $l_1$ from $M$ results in a configuration $M - l_1$ that has at most $m-1$ lines. By the inductive hypothesis, $M - l_1$ is a forest.
Now, assume by contradiction that $M$ is not a forest. Then, there exists a cycle $C$ in $M$ as described in Definition~\ref{forest}. Since all points of $C$ are in $S_M$, $C$ is also a cycle in $M - l_1$, which contradicts the fact that $M - l_1$ is a forest.
This concludes the inductive step.
\end{proof}

By Corollary~\ref{rel} and Proposition~\ref{fore} we have that:
\begin{corollary}
\label{incl}
Every forest 
configuration is nilpotent.
\end{corollary}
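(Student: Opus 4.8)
The plan is to deduce this immediately from the two characterizations that precede it: Proposition~\ref{fore}, which describes forest point-line configurations via a peelable sequence of lines, and Corollary~\ref{rel}, which describes nilpotent $n$-paving matroids via a peelable sequence of dependent hyperplanes. The key observation is that for a point-line configuration the two notions of peelability agree, and that the numerical bound required by Proposition~\ref{fore} is strictly stronger than the one required by Corollary~\ref{rel}.

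First I would record the translation. A point-line configuration is a simple matroid of rank $3$, so each of its circuits has size at least $3$ (simplicity) and at most $4$ (rank $3$), hence it is a $3$-paving matroid in the sense of Definition~\ref{pav}. Under this identification the \emph{lines} of the configuration are precisely its \emph{dependent hyperplanes}, and the deletion $M\setminus l$ used in Proposition~\ref{fore} coincides with the deletion of a dependent hyperplane as in Notation~\ref{kb}; this deletion is legitimate because the relevant intersection has size at most $1\le n-1=2$, which is exactly the condition Notation~\ref{kb} imposes. Thus the sequence of lines furnished by Proposition~\ref{fore} is, verbatim, a sequence of dependent hyperplanes in the sense of Corollary~\ref{rel}.

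Now let $M$ be a forest point-line configuration. By Proposition~\ref{fore} there is a sequence of lines $l_{1},\dots,l_{k}$ such that $M\setminus\{l_{1},\dots,l_{k}\}$ contains no lines and $\lvert l_{j}\cap S_{M\setminus\{l_{1},\dots,l_{j-1}\}}\rvert\le 1$ for every $j\in[k]$. Since $M$ has rank $n=3$, we have $1\le n-1$, so this same sequence satisfies both bullet conditions in Corollary~\ref{rel} (``no remaining dependent hyperplane'' and the bound $\lvert l_{j}\cap S_{M\setminus\{l_{1},\dots,l_{j-1}\}}\rvert\le n-1$). Applying Corollary~\ref{rel} in the direction ``existence of such a sequence $\Rightarrow$ nilpotent'' yields that $M$ is nilpotent, which is Corollary~\ref{incl}.

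I do not anticipate a genuine obstacle: the entire argument is a matter of matching notation. The only points that deserve a sentence of care are (i) that $S$ is computed in the correct successive submatroids in both statements — which holds by construction in Notation~\ref{kb} — and (ii) the trivial inequality $1\le n-1$ for $n=3$, which is what lets the forest peeling sequence double as a nilpotent peeling sequence. Everything substantive has already been proven in Corollary~\ref{rel} and Proposition~\ref{fore}.
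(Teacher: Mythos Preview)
Your proposal is correct and follows exactly the same approach as the paper: the paper's proof is the single line ``By Corollary~\ref{rel} and Proposition~\ref{fore}'', and you have simply spelled out the translation (point-line configurations are $3$-paving, lines are dependent hyperplanes, and the bound $1\le n-1=2$ lets the forest peeling sequence serve as the nilpotent one).
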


\begin{example}
The two 
rank-three matroids depicted on the left of Figure~\ref{new figure} are examples of forest configurations. Consequently, by Corollary~\ref{incl}, they are also nilpotent.
\end{example}

\subsection{Realizability and irreducibility of nilpotent realization spaces}
Our goal in this subsection is to prove that nilpotent matroids are realizable and that their realization spaces are irreducible; see Theorem~\ref{nil rea}.
We now recall the notion of freely adding an element to a flat, also known as a principal extension; see \textup{\cite[Section~7.2]{Oxley}}.
\begin{definition} 
Let $F$ be a flat of a matroid $M$. We say that $M\rq$
is the matroid obtained from $M$ by freely adding an element $e$ to the flat $F$ if the flats of $M\rq$ are the following:
\begin{itemize}
\item flats $G$ of $M$ such that $F\not\subset G$,
\item sets $G\cup e$ where $G$ is a flat such that $F\subset G$, and
\item sets $G\cup e$ where $G$ is a flat such that $F\not\subset G$, and there is no flat $G\rq$  
of rank $\rank(G)+1$ with $G\cup F \subset G\rq$.
\end{itemize}
\end{definition}

\begin{lemma}\label{free}
Let $M$ be a matroid on $[n]$. Then the following hold:
\begin{itemize}
    \item[{\rm (i)}] Let $M\rq$
be the matroid obtained by freely adding an element to a flat
$F$ of $M$. Then $M\rq$
is realizable if and only if $M$ is realizable.
\item[{\rm (ii)}]
Let $M\rq$
be the matroid obtained by adding a coloop to $M$. If $M$ is realizable, then $M\rq$ is realizable.
\item[{\rm (iii)}] Let $p\notin S_{M}$. If 
$M\setminus p$ is realizable, then $M$ is realizable. 
\end{itemize}
\end{lemma}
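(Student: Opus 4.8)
The plan is to prove the three statements in sequence, since (iii) will follow quickly from (i) and (ii). For part (i), the key observation is that freely adding an element $e$ to a flat $F$ of $M$ is, geometrically, the operation of adjoining a generic vector lying in the span of the points of $F$. So given a realization $\gamma$ of $M$ over $\CC^{n}$ ($n = \rank(M)$), I would pick a vector $\gamma_e$ in the subspace $\gamma_F := \operatorname{span}\{\gamma_p : p \in F\}$ that is generic in the sense of avoiding every proper subspace of $\gamma_F$ of the form $\gamma_G \cap \gamma_F$ for $G$ a flat of $M$, and also avoiding every subspace $\gamma_H$ for $H$ a flat with $F \not\subset H$ — there are finitely many such subspaces, each a proper subspace of $\gamma_F$ (for the second family one must check $\gamma_H \cap \gamma_F \subsetneq \gamma_F$, which holds precisely because $F \not\subset H$ implies $F \not\subset H$ as flats, so $\gamma_F \not\subset \gamma_H$ in a generic realization), so their union is not all of $\gamma_F$. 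I would then verify that the matroid realized by $\gamma \cup \{\gamma_e\}$ has exactly the flats prescribed in the definition of $M'$: a set $G$ with $F \not\subset G$ stays a flat and does not absorb $e$; a flat $G \supset F$ must now contain $e$; and the third family accounts for the flats $G$ with $F \not\subset G$ that are forced to grow by $e$ because no rank-$(\rank(G)+1)$ flat already contains $G \cup F$. Conversely, if $M'$ is realizable then restricting any realization to the points of $M$ gives a realization of $M$, since $M = M' \setminus e$ is a submatroid (deletion), and deletion of a realizable matroid is realizable by simply forgetting the vector $\gamma_e$.

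For part (ii), adding a coloop corresponds to adjoining a vector $\gamma_e$ outside the span of all of $\gamma$: if $\gamma$ realizes $M$ in $\CC^{\rank(M)}$, embed $\CC^{\rank(M)} \hookrightarrow \CC^{\rank(M)+1}$ and take $\gamma_e = e_{\rank(M)+1}$; then $\gamma_e$ is independent from every subset of $\gamma$ and lies in no circuit, so $e$ is a coloop and the resulting collection realizes $M'$. (Here the direction is only one-way, as stated, which is all we need.)

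For part (iii), suppose $p \notin S_M$, i.e. $|\mathcal{L}_p| \le 1$, and suppose the submatroid $N$ on $\mathcal{P} \setminus \{p\}$ is realizable. There are two cases. If $\mathcal{L}_p = \emptyset$, then $p$ lies in no subspace of $M$, meaning $p$ is not in any circuit of size $\le n$; in fact $p$ is a coloop of $M$ (it is in no circuit at all, since every circuit spans a subspace and $p$ in a circuit would put $p$ in the corresponding equivalence class), so $M$ is obtained from $N$ by adding a coloop and part (ii) applies. If $\mathcal{L}_p = \{l\}$ for a single subspace $l$, then $M$ is obtained from $N$ by freely adding $p$ to the flat $\closure{l \setminus \{p\}}$ — the point $p$ is generic on $l$ subject to lying on no other subspace, which is exactly the content of $|\mathcal{L}_p| = 1$ — so part (i) applies. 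In either case $M$ is realizable.

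The main obstacle I anticipate is the bookkeeping in part (i): one must carefully match the three clauses in the definition of "freely adding to a flat" against the flats of the matroid actually realized by the perturbed vector configuration, and in particular argue that the genericity conditions imposed on $\gamma_e$ are simultaneously satisfiable (finitely many proper-subspace avoidance conditions inside $\gamma_F$) and that they are exactly the conditions needed — no more, no less — to produce the prescribed flat lattice. The converse directions and parts (ii), (iii) are comparatively routine given the submatroid/deletion machinery already set up in Lemma~\ref{lem sub} and Definition~\ref{subm}.
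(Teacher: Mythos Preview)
Your approach to parts (i) and (ii) matches the paper's essentially line for line, and the overall strategy for (iii)---reduce to (i) and (ii)---is also what the paper does. However, there is a genuine gap in your case analysis for (iii).

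When $\mathcal{L}_p = \emptyset$, you conclude that $p$ is a coloop, arguing that ``every circuit spans a subspace and $p$ in a circuit would put $p$ in the corresponding equivalence class.'' This is not correct: recall from Definition~\ref{general} that $\mathcal{L}$ is built from equivalence classes of circuits of size \emph{strictly less than} $n+1$, where $n = \rank(M)$. Circuits of size $n+1$ do not contribute to $\mathcal{L}$. So $\mathcal{L}_p = \emptyset$ only tells you that $p$ belongs to no circuit of size $\le n$; it may still sit in a circuit of size $n+1$, in which case $p$ is \emph{not} a coloop. (Concretely, take the uniform matroid $U_{n,n+1}$: every point has $\mathcal{L}_p = \emptyset$, yet no point is a coloop.)

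The fix, which is exactly what the paper does, is to split the case $\mathcal{L}_p = \emptyset$ further according to whether $\rank(M) = \rank(N)$ or $\rank(M) > \rank(N)$, where $N$ is the submatroid on $\mathcal{P}\setminus\{p\}$. If the rank drops, then $p$ really is a coloop and your argument via (ii) goes through. If the rank does not drop, then $M$ is obtained from $N$ by freely adding $p$ to the unique flat of full rank (the whole ground set), and you invoke (i) instead. With this extra subcase your proof is complete and agrees with the paper's.
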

\begin{proof} 
(i) and (ii) are well-known and follow from a result in \cite{piff1970vector}; see also \textup{\cite[Proposition~11.2.16]{Oxley}}.

(iii) 
We have the following cases.
If $\size{\Li_{p}}=0$ and $\rank(M)=\rank(M\setminus p)$, then $M$ is obtained from $M\setminus p$ by freely adding a point to the unique flat of rank $\rank(M)$. Therefore, $M$ is realizable by (i).
Similarly, if $\size{\Li_{p}}=\{l\}$ then $M$ is obtained from $M\setminus p$ by freely adding a point to the flat $\closure{l}$, making $M$ realizable.
If $\size{\Li_{p}}=0$ and $\rank(M)>\rank(M\setminus p)$ then $M$ is obtained from $M\setminus p$ by adding a coloop, hence $M$ is realizable by (ii).
\end{proof}

\begin{theorem}\label{nil rea}
Let $M$ be a nilpotent matroid. Then $M$ is realizable and $\Gamma_{M}$  
is irreducible. 
\end{theorem}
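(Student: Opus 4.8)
The plan is to induct on the length $l_n(M)$ of the nilpotent chain, or equivalently on the number of points $p \in \mathcal{P}$ with $|\mathcal{L}_p| = 1$ (plus isolated points and coloops), handling both realizability and irreducibility simultaneously. The base case is when $M = M_1 = S_M$, i.e. every point of $M$ lies on at least two subspaces; but for a nilpotent matroid this forces $M = \emptyset$ by Lemma~\ref{equiv 2}(i), which is trivially realizable with an irreducible (empty/point) variety. For the inductive step, consider a nilpotent matroid $M$ with $M \neq \emptyset$. Then $M_1 = S_M \subsetneq M$, so there is a point $p \in \mathcal{P} \setminus S_M$, i.e. $|\mathcal{L}_p| \leq 1$. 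Let $N$ be the submatroid on $\mathcal{P} \setminus \{p\}$. Since $S_N \subseteq S_M \subseteq \mathcal{P}\setminus\{p\}$, the nilpotent chain of $N$ is (a submatroid-truncation of) that of $M$, so $N$ is again nilpotent with shorter chain; by induction $N$ is realizable and $V_N$ is irreducible.

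For \emph{realizability} of $M$: this follows immediately from Lemma~\ref{free}(iii), since $p \notin S_M$ and $N$ is realizable. (The three cases there — $p$ on no subspace with $\rank(M) = \rank(N)$, $p$ on exactly one subspace $l$, and $p$ a coloop — exhaust the possibilities for a point with $|\mathcal{L}_p| \leq 1$.)

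For \emph{irreducibility} of $V_M$: the idea is that $V_M$ fibers over $V_N$ with irreducible fibers. Concretely, consider the projection $\pi \colon V_M \to (\mathbb{C}^n)^{|M|-1}$ forgetting the coordinate $\gamma_p$; I expect $\overline{\pi(\Gamma_M)} = V_N$ (one inclusion is clear; the other, that every realization of $N$ extends to one of $M$, is exactly the mechanism in the proof of Lemma~\ref{free}(iii), applied pointwise/generically). Over a generic $\gamma \in \Gamma_N$, the fiber is: all of $\mathbb{C}^n$ if $p$ lies on no subspace and $\rank$ is unchanged; the subspace $\gamma_l$ (of dimension $\rank(l)$) if $\mathcal{L}_p = \{l\}$; and again a copy of $\mathbb{C}$-worth of choices if $p$ is a coloop — in every case an irreducible variety of constant dimension. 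One then argues that $\Gamma_M$ is (a dense subset of) an irreducible variety by a standard fibration argument: the incidence variety $Z = \{(\gamma, v) : \gamma \in \Gamma_N,\ v \in \gamma_l\}$ (resp. $Z = \Gamma_N \times \mathbb{C}^n$) is irreducible because it maps to the irreducible $\Gamma_N$ with irreducible equidimensional fibers (a vector-bundle-like / Grassmannian-graph argument, cf.\ the continuous-image reasoning in Proposition~\ref{x1}), and $\Gamma_M$ is the image of a Zariski-open subset of $Z$ under a continuous (indeed regular) map; hence $V_M = \overline{\Gamma_M}$ is irreducible.

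The main obstacle is the irreducibility half, specifically making the fiber-bundle argument rigorous when $\mathcal{L}_p = \{l\}$: one must check that the total space $Z = \{(\gamma, v) : \gamma \in \Gamma_N,\ v \in \langle \gamma_q : q \in l\rangle\}$ is irreducible even though the spanning set $\{\gamma_q : q \in l\}$ and hence the fiber $\gamma_l$ varies with $\gamma$ — this is where one needs that $\dim \gamma_l = \rank(l)$ is constant on $\Gamma_N$ (true since $\gamma$ is a genuine realization), so that $Z \to \Gamma_N$ is a vector bundle and $Z$ is irreducible. A secondary point is verifying $p_{[n]}$-type surjectivity $\overline{\pi(\Gamma_M)} = V_N$ and that taking closures commutes appropriately, but this is routine given the explicit extension construction from Lemma~\ref{free}(iii). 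I would also double-check that deleting a non-$S_M$ point genuinely shortens the nilpotent chain in all three cases (coloop, isolated, degree-one), which amounts to the parenthetical remark $S_N \subseteq S_M$ in Notation~\ref{k1} together with tracking ranks.
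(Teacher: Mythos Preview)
Your approach is essentially correct and close to the paper's, but the induction variable needs fixing. Removing a single point $p \notin S_M$ does \emph{not} in general shorten $l_n(M)$: if deleting $p$ destroys a subspace through some $q\in S_M$, then $q$ may drop out of $S_N$, but the overall chain length can stay the same. The clean fix is to induct on $|M|$ for both parts (this is exactly what the paper does for realizability). With that change your fibration argument goes through: in the case $\mathcal{L}_p=\{l\}$, choose a basis $B\subseteq l\setminus\{p\}$ of the flat $\overline{l}$ (which exists since $p\in\overline{l\setminus\{p\}}$), so that $Z$ is the image of the irreducible $\Gamma_N\times\mathbb{C}^{|B|}$ under the polynomial map $(\gamma,c)\mapsto(\gamma,\sum_{b\in B}c_b\gamma_b)$; then $\Gamma_M\subseteq Z\subseteq V_{\mathcal{C}(M)}$ with $\Gamma_M$ open and nonempty (by the realizability half), hence irreducible. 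Two small corrections: in the coloop case the fiber is an open subset of $\mathbb{C}^n$, not ``a copy of $\mathbb{C}$''; and when $p$ is a coloop you need Proposition~\ref{x1} to pass from irreducibility of $\Gamma_N$ (in $\mathbb{C}^{n-1}$) to that of $\Gamma_{N,n}$.

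The paper's irreducibility proof differs only in granularity: instead of peeling off one point at a time, it removes \emph{all} of $M\setminus S_M$ in a single step and inducts on $l_n(M)$. The parametrizing space is
\[
\Gamma_{S_M,n}\times\prod_{l\in\mathcal{L}}(\mathbb{C}^n)^{\rank(l)-\rank(S_l)}\times\prod_{p\notin S_M}\mathbb{C}^{a_p},
\]
where the middle factor extends each $\gamma_{S_l}$ to a $\rank(l)$-dimensional subspace and the last factor supplies coefficients for every new point simultaneously. This avoids your three-way case split (coloop / degree zero / degree one) at the cost of a bulkier domain; the underlying idea, that points outside $S_M$ are linearly parametrized over an irreducible base, is the same as yours.
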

\begin{proof}
To prove that $M$ is realizable, we use induction on $[n]$, the size of the ground set.
The base case is 
$n=1$, where the result holds. Now suppose that it is true for nilpotent matroid $M$ on $[i]$ with $i\leq n-1$ and let $M$ be a matroid on $[n]$. Since $M$ is nilpotent we know by definition that $S_{M}\subsetneq M$, so we can take a point $p\notin S_{M}$. By the inductive hypothesis, we know that 
$M\setminus p$ is realizable, then applying Lemma~\ref{free}(iii) we conclude that $M$ is also realizable. 

\medskip

To show that $\Gamma_{M}$ is irreducible, we will prove that for a \textit{nilpotent} matroid $N$ on $[n]$, if $S_{N} = \emptyset$ or $\Gamma_{S_{N}}$ is irreducible, then $\Gamma_{N}$ is also irreducible. By establishing this, we can use induction on $l_{n}(M)$, the length of the nilpotent chain of $M$, to achieve the desired result. 

Let $N$ be a matroid with the given property and let $r = \rank(N)$, with $\Li$ denoting its set of subspaces. For each $l \in \Li$, let $b_{l}$ denote $\rank(S_{l})$, and let $q_{l,1}, \ldots, q_{l, b_{l}}$ be a basis of $S_{l}$. For $p\not \in S_{N}$, we also define
\[a_{p}=\begin{cases}
\rank(l) \ &\text{if} \ \Li_{p}=\{l\},\\
r \ &\text{if}\  \Li_{p}=\emptyset.
\end{cases}\]
If $S_{N}\neq \emptyset$ we define the space 
\[X\subset \Gamma_{S_{N}}\times \prod_{l\in \Li} (\CC^{r})^{\rank(l)}\times \prod_{p\notin S_{N}} \CC^{a_{p}},\]
consisting of the tuples 
\[\pare{\gamma,h_{l},\lambda_{p}:l\in \Li,p \notin S_{N}}\]
that satisfy $(h_{l})_{i}=\gamma_{q_{l,i}}$ for every $l\in \Li$ and $i\in [b_{l}]$. Here, each $h_{l}$ is considered as a tuple of $\rank(l)$ vectors in $\CC^{r}$. In particular, by definition, we have that 
$$X\cong \Gamma_{S_{N}}\times \prod_{l\in \Li}(\CC^{r})^{\rank(l)-b_{l}}\times \prod_{p\notin S_{N}} \CC^{a_{p}}.$$
If $S_{N}=\emptyset $ we define $X$ as
$\textstyle{\prod_{l\in \Li} (\CC^{r})^{\rank(l)}\times \prod_{p\notin S_{N}} \CC^{a_{p}}}.$
We know that $\Gamma_{S_{N}}$ is irreducible. Thus, in both cases, $X$ is irreducible, as it is a product of irreducible sets.

\smallskip
For any $q\notin S_{N}$, we denote $l_{q}$ for the unique subspace of $N$ containing $q$, and we define the map: 
\[
\psi: X \rightarrow (\mathbb{C}^{r})^{n} \quad \text{defined by} \quad \psi\left(\gamma, h_{l}, \lambda_{p} : l \in \Li, p \notin S_{N}\right) =
\begin{cases}
\gamma_{q} & \text{if } q \in S_{N}, \\
\textstyle{\sum_{i=1}^{\text{rank}(l_{q})} (\lambda_{q})_{i} (h_{l_{q}})_{i}} & \text{if } q \notin S_{N}.
\end{cases}
\]
We will prove that $\im(\psi)\subset V_{\mathcal{C}(N)}$. To establish this, we will show that $\psi(Y)\in V_{\mathcal{C}(N)}$, where $Y=\pare{\gamma,h_{l},\lambda_{p}:l\in \Li,p\notin S_{N}}$ is an arbitrary element of $X$. We have to prove that $\rank \{\psi(Y)_{q}:q\in l\}\leq \rank(l)$ for every $l\in \Li$. We will prove this by seeing that $\{\psi(Y)_{q}:q\in l\}\subset \overline{h_{l}}$, where $\overline{h_{l}}$ denotes the subspace associated to $h_{l}$. We consider the following cases: 
\begin{itemize}
\item Suppose $q\in S_{l}$, then $q\in \closure{\{q_{l,1},\ldots,q_{l,b_{l}}\}}$, implying that $\gamma_{q}\in \text{span}\{\gamma_{q_{l,1}},\ldots,\gamma_{q_{l,b_{l}}}\}$. Since $\gamma_{q_{l,i}}\in \overline{h_{l}}$ for every $i\in [b_{l}]$ we obtain that $\gamma_{q}\in \overline{h_{l}}$.
\item If $q\notin S_{l}$ then $l=l_{q}$ and we have that $\psi(Y)_{q}\in \overline{h_{l}}$ since $(h_{l})_{i}\in \overline{h_{l}}$ for every $i\in [\rank(l)]$.
\end{itemize}
We now prove that $\Gamma_{N} \subset \operatorname{im}(\phi)$. The idea is that every realization of $\Gamma_{N}$ can be obtained from a realization of $\Gamma_{S_{N}}$ by adding the points of $[n] \setminus S_{N}$. 
More precisely, let $\gamma \in \Gamma_{N}$. For each $l \in \Li$, take $h_{l}$ to be an extensor associated with $\gamma_{l}$. For each $q \notin S_{N}$, consider the vector $\lambda_{q} \in \CC^{\rank(l_{q})}$ as the coordinates of $\gamma_{q}$ in the basis of $\gamma_{l_{q}}$ determined by the extensor $h_{l_{q}}$. In particular, we have that: 
\[
\gamma = \psi\left(\restr{\gamma}{S_{N}}, h_{l}, \lambda_{q} : l \in \Li, q \notin S_{N}\right),
\]
which implies that $\Gamma_{N}\subset \im(\phi)$.
Since the coordinates of the vectors in the image are polynomials in the vectors of $\gamma$, $\psi$ is a polynomial map, and hence continuous. Consequently, $\operatorname{im}(\psi)$ is irreducible because $X$ is irreducible. We have also shown that $\operatorname{im}(\psi) \subset V_{\mathcal{C}(N)}$ and that $\Gamma_{N} \subset \operatorname{im}(\phi)$. Thus, $\Gamma_{N} \subset \operatorname{im}(\phi) \subset V_{\mathcal{C}(N)}$, which implies that $\Gamma_{N}$ is open in $\operatorname{im}(\phi)$ since it is open in $V_{\mathcal{C}(N)}$. Therefore, $\Gamma_{N}$ is irreducible, which completes the proof.
\end{proof}

\begin{remark}
Note that the assumption of $M$ being nilpotent in Theorem~\ref{nil rea} is essential. To establish realizability, the key ideas lie in items (i) and (ii) of Lemma~\ref{free}, which together show that if a point $p$ has degree less than two, then the realizability of 
$M\setminus \{p\}$ implies that of $M$. This inductive principle underlies the realizability of nilpotent matroids. However, we are not aware of an analogous result when the point $p$ has degree greater than one. 
Regarding the irreducibility of $\Gamma_M$, our argument clearly depends on $M$ being nilpotent. The crucial point is that when a point $p$ has degree at most one, $\Gamma_M$ can be described as a fibration over $\Gamma_{M\setminus \{p\}}$, allowing us to deduce irreducibility inductively.
\end{remark}

\subsection{Defining equations}
In this subsection, we compute the defining equations for the matroid varieties of specific nilpotent matroid families. Theorem~\ref{je} shows that for nilpotent paving matroids with no points of degree greater than two, the circuit and matroid varieties coincide. Theorem~\ref{gc 3} provides the defining equations for forest configurations. 

\subsubsection{Nilpotent paving matroids without points of degree greater than two}
In this subsection, we show that nilpotent matroids are liftable. Consequently, we show that for nilpotent paving matroids with no points of degree greater than two, the circuit variety and the matroid variety are identical. This result is established using the following proposition. We first recall the notion of a \textit{liftable matroid} from \cite{Fatemeh4}. In this subsection, let $M$ represent an $r$-paving matroid and let $[n]$ denote its ground set.

\begin{definition}\label{liftable}
Consider a collection of vectors $\gamma = \{\gamma_{p} : p \in [n]\}\subset \CC^{r}$ and a vector $q\in \CC^{r}$. Then:
\begin{itemize} 
\item  We say that  $\widetilde{\gamma}=\{\widetilde{\gamma}_{p}:p\in [n]\}\subset \CC^{r}$ is a {\em lifting} of $\gamma$ from the vector $q$ if, for each $p\in [n]$, there exists $z_{p}\in \CC$ such that $\widetilde{\gamma}_{p}=\gamma_{p}+z_{p}q$. The lifting is called {\em non-degenerate} if the vectors of $\widetilde{\gamma}$ do not all lie in the same hyperplane.
\item An $r$-paving matroid $M$ is \textit{liftable} if, for any such collection of vectors $\gamma$ 
 of rank $r-1$ in a hyperplane $H$ of $\CC^{r}$, there exists a non-degenerate lifting $\widetilde{\gamma}$, 
 within $V_{\mathcal{C}(M)}$, 
 from any vector $q \notin H$.
\end{itemize}
\end{definition}

\begin{proposition}\label{ñlh}
For a nilpotent $r$-paving matroid $M$, all of its full-rank submatroids are liftable.
\end{proposition}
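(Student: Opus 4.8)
The strategy is to induct on the number of dependent hyperplanes of the submatroid, using the structural description of nilpotent paving matroids from Corollary~\ref{rel} to peel off one hyperplane at a time. Since every full-rank submatroid $N$ of a nilpotent $n$-paving matroid is again a nilpotent $n$-paving matroid (submatroids of nilpotent matroids are nilpotent by Notation~\ref{kb}-type reasoning, and full-rank submatroids of paving matroids are paving by Definition~\ref{subm}), it suffices to prove that every nilpotent $n$-paving matroid $M$ is liftable. So fix such an $M$ and a collection $\gamma=\{\gamma_p: p\in\mathcal{P}\}$ in $\mathbb{C}^n$ of rank $n-1$ lying in a hyperplane $H$, together with a vector $q\notin H$; we must produce a non-degenerate lift of $\gamma$ in $V_{\mathcal{C}(M)}$.

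\textbf{Base case and the peeling step.} If $M$ has no dependent hyperplane, $V_{\mathcal{C}(M)}$ imposes no constraints beyond living in $\mathbb{C}^n$, so any lift that moves at least one point off $H$ works (e.g.\ lift one freely-chosen point to a vector with nonzero $q$-component and leave the rest in $H$). For the inductive step, by Lemma~\ref{equiv 2}(ii) (or directly Corollary~\ref{rel}) there is a dependent hyperplane $l$ of $M$ with $\lvert S_l\rvert\le n-1$, i.e.\ $l$ contains at most $n-1$ points that also lie on some other dependent hyperplane. Form $M\setminus l$ as in Notation~\ref{kb}: it is again a nilpotent $n$-paving matroid with one fewer dependent hyperplane, whose point set is $(\mathcal{P}\setminus l)\cup S_l$. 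Restrict $\gamma$ to this point set, apply the inductive hypothesis to obtain a non-degenerate lift $\widetilde{\gamma}$ of $\gamma|_{M\setminus l}$ from $q$, and then extend $\widetilde{\gamma}$ to the points of $l\setminus S_l$ that were dropped.

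\textbf{Extending over the dropped hyperplane.} The points we must still lift are exactly those of $l\setminus S_l$; since $l$ is a dependent hyperplane of $M$, the defining condition of $V_{\mathcal{C}(M)}$ at $l$ requires $\dim(\widetilde{\gamma}_l)\le n-1$, and there are no other dependent hyperplanes of $M$ containing any point of $l\setminus S_l$ (by definition of $S_l$). So it suffices to place the lifts of the points of $l\setminus S_l$ inside a single hyperplane $H'$ of $\mathbb{C}^n$ that already contains $\{\widetilde{\gamma}_p: p\in S_l\}$: indeed $\lvert S_l\rvert\le n-1$ means these already-lifted vectors span a subspace of dimension at most $n-1$, hence lie in some hyperplane $H'$, and one chooses each lift $\widetilde{\gamma}_p$ for $p\in l\setminus S_l$ to be any vector in $H'$ projecting to $\gamma_p$ under the projection along $q$ (possible since $q\notin H$ guarantees the projection $\mathbb{C}^n\to H$ along $q$ is an isomorphism when restricted to any hyperplane transverse to $q$; if $H'$ happens to contain $q$ one perturbs $H'$ slightly away from $q$ while still containing the finitely many vectors $\{\widetilde{\gamma}_p:p\in S_l\}$, using $\lvert S_l\rvert\le n-1<n$). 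The resulting collection lies in $V_{\mathcal{C}(M)}$ — the rank condition holds at $l$ by construction and at every other dependent hyperplane because those are inherited from $M\setminus l$ — and it is non-degenerate since $\widetilde{\gamma}$ already was.

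\textbf{Main obstacle.} The delicate point is the simultaneous compatibility in the extension step: we need a hyperplane $H'$ that (a) contains the already-fixed lifted vectors over $S_l$, (b) avoids containing $q$ so that the fibers of the projection along $q$ meet $H'$ in exactly one point, and (c) is otherwise unconstrained. Condition (a) uses precisely $\lvert S_l\rvert\le n-1$, which is the nilpotency hypothesis doing its work; conditions (a) and (b) together are achievable because the span of the $S_l$-vectors has dimension $\le n-1$ and generic hyperplanes through it miss any prescribed single vector $q$ not already forced to lie in that span. One must also check the bookkeeping that deleting $l$ does not disturb the lifts over $S_l$ (it does not, since those points survive in $M\setminus l$) and that no circuit of $M$ of size $n+1$ creates an obstruction — but in a paving matroid the size-$(n+1)$ circuits are exactly the non-bases not contained in a dependent hyperplane, and the rank-$\le n-1$-on-each-dependent-hyperplane condition is the only one cut out by $I_{\mathcal{C}(M),n}$ on $\mathbb{C}^n$, so there is nothing further to verify.
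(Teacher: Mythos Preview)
Your induction on the number of dependent hyperplanes is a natural idea, and different from the paper's (which inducts on $|M|$, removing a single point $p$ with $|\mathcal{L}_p|\le 1$ and doing a case split on the set $L=\{l\in\mathcal{L}:\dim\gamma_l=n-1\}$). However, your extension step has a genuine gap.

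You claim that if the hyperplane $H'$ through $\{\widetilde{\gamma}_p:p\in S_l\}$ happens to contain $q$, you can ``perturb $H'$ slightly away from $q$ while still containing'' those vectors. This fails precisely when $q\in\operatorname{span}\{\widetilde{\gamma}_p:p\in S_l\}$: then every hyperplane containing those vectors contains $q$, and no perturbation helps. That situation can occur --- for instance, if two points $p_1,p_2\in S_l$ satisfy $\gamma_{p_1}=\gamma_{p_2}$ but the inductive step lifts them to different heights, then $q$ is a scalar multiple of $\widetilde{\gamma}_{p_1}-\widetilde{\gamma}_{p_2}$. Worse, when $q$ lies in that span and additionally $\dim\gamma_l=n-1$ (remember $\gamma$ is an \emph{arbitrary} collection in $H$, not one in $V_{\mathcal{C}(M)}$), a short computation shows that $\widetilde{\gamma}_l$ is forced to contain both $q$ and all of $\gamma_l=H$, hence equals $\mathbb{C}^n$; so \emph{no} choice of lifts over $l\setminus S_l$ works for that particular $\widetilde{\gamma}$. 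The inductive hypothesis only hands you \emph{some} non-degenerate lift of $\gamma|_{M\setminus l}$, with no control preventing this obstruction.

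There is also a smaller issue: your inductive hypothesis is that $M\setminus l$ is liftable, but liftability only speaks about collections of rank exactly $n-1$, whereas $\gamma|_{M\setminus l}$ may have strictly smaller rank (the missing rank may sit entirely on $l\setminus S_l$). The paper sidesteps both problems by peeling off one point rather than a whole hyperplane: extending a lift over a single point with $|\mathcal{L}_p|\le 1$ is a one-dimensional problem handled directly (Cases~3.1--3.2 there), and the preliminary split on $|L|$ disposes of the low-rank situations before the induction is invoked.
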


\begin{proof}
We will first prove that $M$ is liftable. Let $\gamma = \{\gamma_p : p \in [n]\} \subset \mathbb{C}^r$ be a collection of vectors with rank $r-1$ in a hyperplane $H$, and let $q \notin H$. We need to show that these vectors can be lifted in a non-degenerate manner from the point $q$ to vectors in $V_{\mathcal{C}(M)}$.
We will use induction on $n$, the number of elements in the ground set of $M$, with $r$ fixed. The base case is with $n=r$, for which the result is trivial. For the inductive step, assume the result holds for all nilpotent $r$-paving matroids $N$ on $[i]$ with $i\leq n-1$ and let $M$ be an $r$-paving matroid on $[n]$. We will show it holds for $M$.
Let $L \subset \Li$ be the subset of hyperplanes $\{l \in \Li : \rank(\gamma_l) = r-1\}$. We will consider cases based on this subset.

\medskip
\textbf{Case 1.} Suppose $L = \emptyset$. Since $\rank(\gamma) = r-1$, we can select points $\{p_1, \ldots, p_{r-1}\} \subset [n]$ such that $\{\gamma_{p_1}, \ldots, \gamma_{p_{r-1}}\}$ form a basis for $H$. Consider a point $p \notin \{p_1, \ldots, p_{r-1}\}$ and lift $\gamma_p$ to a vector $\tau_p$ not in $H$. For all $p' \neq p$, set $\tau_{p'} = \gamma_{p'}$, and let $\tau$ be the resulting lifting.
Since $\tau$ consists of vectors spanning $\mathbb{C}^r$, it is clearly non-degenerate. For each $l \in \Li$, we have $\tau_{l}\subset \is{\gamma_{l},q}$, which implies $\dim(\tau_l) \leq r-1$ because $\dim(\gamma_l) \leq r-2$ (as $L = \emptyset$). Thus, $\tau \in V_{\mathcal{C}(M)}$, verifying that this lifting is valid.

\medskip
\textbf{Case 2.} Suppose $\size{L}=1$ with $L=\{l\}$. Choose a point $p\in [n]\backslash \{l\}$ and lift the vector $\gamma_{p}$ to any vector $\tau_{p}$ not in $H$. For all $p\rq \neq p$, set $\tau_{p\rq}=\gamma_{p\rq}$ and let $\tau$ be the resulting lifting. 
Since $\tau$ is formed by adding $\tau_p$ to the vectors $\gamma_{p'}$ for $p' \neq p$, and $\tau_p \notin H$ while $\tau_l = \gamma_l$, $\tau$ is non-degenerate. For each $l' \neq l \in \Li$, we have $\tau_{l\rq}\subset \is{\gamma_{l\rq},q}$, which implies $\dim(\tau_{l'}) \leq r-1$ since $\dim(\gamma_{l'}) \leq r-2$ (because $l' \notin L$). Additionally, $\tau_l = \gamma_l$, implying that $\tau \in V_{\mathcal{C}(M)}$. Thus, this lifting is valid.

\medskip
\textbf{Case 3.} Suppose $\size{L} \geq 2$. Since $M$ is nilpotent, $S_M \subsetneq M$, so there exists a point $p \in [n]$ such that $\size{\Li_p} \leq 1$. Given that $M$ has at least two dependent hyperplanes, the submatroid 
$M\setminus p$ has full rank and is also nilpotent. By induction, there exists a non-degenerate lifting of the vectors $\{\gamma_{p'} : p' \in [n]\setminus p\}$ to a collection of vectors $\{\tau_{p'} : p' \in [n]\setminus p\}$ with $\tau \in V_{\mathcal{C}(M\setminus p})$.
We now separate into cases based on the remaining hyperplanes in $L$.

\medskip
\textbf{Case 3.1.} Suppose $\size{\Li_p} = 0$. In this case, we can extend $\tau$ by defining $\tau_p = \gamma_p$. The lifting $\tau$ remains valid as it includes all necessary vectors.

\medskip
\textbf{Case 3.2.} Suppose $\size{\Li_p} = 1$ with $\Li_p = \{l\}$. If $\dim(\tau_l) \leq r-2$, we extend $\tau$ by setting $\tau_p = \gamma_p$. This lifting $\tau$ is valid since $\dim(\tau_l) \leq r-1$. If $\dim(\tau_l) = r-1$, we extend $\tau$ by defining $\tau_p$ as the intersection of the line joining $q$ and $\gamma_p$ with the hyperplane $\tau_l$. This lifting is valid because the point $p$ only belongs to the dependent hyperplane $l$.

\smallskip
Finally, since every full-rank submatroid of $M$ is a nilpotent $r$-paving matroid, it is liftable.
\end{proof}

\begin{theorem}\label{je}
Let $M$ be a nilpotent $r$-paving matroid without points of degree greater than two. Then $V_{M}=V_{\mathcal{C}(M)}$, that is the circuit variety and the matroid variety are identical.

\end{theorem}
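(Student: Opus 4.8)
The plan is to prove the two inclusions $V_{\mathcal{C}(M)} \subseteq V_M$ and $V_M \subseteq V_{\mathcal{C}(M)}$, of which only the first requires work, since $V_M \subseteq V_{\mathcal{C}(M)}$ is automatic from $I_{\mathcal{C}(M)} \subseteq I_M$ (Definition~\ref{cir}). For the reverse inclusion, I would take an arbitrary $\gamma \in V_{\mathcal{C}(M)}$ and show it lies in $\overline{\Gamma_M}$. Because $V_{\mathcal{C}(M)}$ is irreducible for matroids with no points of degree greater than two — this follows from Remark~\ref{rem irred 3} once we know $\Gamma_M$ is dense in $V_{\mathcal{C}(M)}$, but that is circular — the cleaner route is to establish directly that $\Gamma_M$ is dense in $V_{\mathcal{C}(M)}$. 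So the real statement to prove is: every $\gamma \in V_{\mathcal{C}(M)}$ can be approximated arbitrarily closely (or deformed via an infinitesimal motion) by realizations of $M$.

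The key tool is Proposition~\ref{ñlh}: every full-rank submatroid of a nilpotent $n$-paving matroid is liftable. I would argue by induction on $n = \rank(M)$, or better, by a dimension/liftability induction mirroring the structure of that proof. Given $\gamma \in V_{\mathcal{C}(M)}$, we have $\dim(\gamma_l) \le n-1$ for every dependent hyperplane $l$. If $\gamma$ already has rank $n$, then generically its dependent-hyperplane constraints are tight and one must only perturb within the variety $V_{\mathcal{C}(M)}$ to remove any \emph{unwanted} dependencies (dependencies not forced by $M$) while keeping the forced ones; here the hypothesis that no point has degree $>2$ is what prevents perturbation conflicts, exactly as in Case 3.2 of Proposition~\ref{ñlh}, where a degree-$\le 1$ point can be slid freely along the line $q\gamma_p$. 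If $\gamma$ has rank $\le n-1$, it sits inside a hyperplane $H$; pick $q \notin H$ and apply liftability (Proposition~\ref{ñlh}) to get a non-degenerate lift $\tau \in V_{\mathcal{C}(M)}$ of strictly larger rank, and iterate until rank $n$ is reached, then finish as before. Throughout, the nilpotent chain gives a point $p \notin S_M$ to peel off, reducing to the full-rank submatroid $N = M \setminus \{p\}$ on $\mathcal P \setminus \{p\}$, which is again nilpotent paving without points of degree $>2$; one handles $p$ at the end using that $|\mathcal{L}_p| \le 1$, so its placement imposes at most one constraint and never obstructs approximation.

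More precisely, I would show: for $\gamma \in V_{\mathcal{C}(M)}$, there is an infinitesimal motion producing $\widetilde\gamma \in \Gamma_M$. By induction on $|M|$ with $n$ fixed: remove a point $p \notin S_M$ (exists by nilpotency), get $\widetilde{\gamma}|_N \in \Gamma_N$ infinitesimally close to $\gamma|_N$ by induction; then re-insert $p$. Since $|\mathcal L_p| \le 1$, either $p$ lies on no dependent hyperplane — place $\widetilde\gamma_p$ generically near $\gamma_p$ — or $p$ lies on a unique dependent hyperplane $l$, and we must place $\widetilde\gamma_p$ in the hyperplane $\widetilde\gamma_l$ near $\gamma_p$; this is possible because $\gamma_p \in \gamma_l$ and $\widetilde\gamma_l$ is infinitesimally close to a hyperplane containing $\gamma_l$. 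Crucially, re-inserting $p$ must not create forbidden dependencies: since $p$ has degree $\le 1$, the only circuits through $p$ are within $l$ (or basis-sized circuits), and a generic choice of $\widetilde\gamma_p$ within $\widetilde\gamma_l$ avoids all unwanted coincidences — this is where "no points of degree greater than two" is used, guaranteeing the freedom to place each new point without being over-constrained.

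The main obstacle I anticipate is the base case and bookkeeping of the double induction: one must simultaneously control the rank (raising it to $n$ via liftability) and the point-by-point reinsertion (via the nilpotent chain), and verify at each reinsertion that no dependency outside $\mathcal D(M)$ is accidentally introduced — i.e. that the approximating collection genuinely lies in $\Gamma_M$ and not merely in $V_{\mathcal{C}(M)}$ minus the bad locus. Showing that a generic perturbation within the relevant subspace avoids the finitely many "bad" determinantal conditions (the basis ideal $J_M$ of Definition~\ref{cir}) is the technical heart; the degree-$\le 2$ hypothesis is precisely what keeps these bad conditions from being forced, so that $\Gamma_M$ is a dense open subset of the irreducible $V_{\mathcal{C}(M)}$, giving $V_M = \overline{\Gamma_M} = V_{\mathcal{C}(M)}$.
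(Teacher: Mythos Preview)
The paper's own proof is two sentences: it invokes Proposition~\ref{ñlh} (every full-rank submatroid of a nilpotent paving matroid is liftable) and then cites \cite[Proposition~4.27]{Fatemeh4}, an external result stating that for an $n$-paving matroid without points of degree greater than two, liftability of all full-rank submatroids forces $V_M=V_{\mathcal{C}(M)}$. Your proposal is therefore an attempt to reprove that cited proposition by hand.

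Your induction on $|M|$ by peeling off a point $p\notin S_M$ has a genuine gap at the reinsertion step. You write that when $\mathcal L_p=\{l\}$ one can place $\widetilde\gamma_p$ in the hyperplane $\widetilde\gamma_{l}$ near $\gamma_p$ ``because $\gamma_p\in\gamma_l$ and $\widetilde\gamma_l$ is infinitesimally close to a hyperplane containing $\gamma_l$''. This fails when $\dim\gamma_{l\setminus\{p\}}<n-1$: the inductive hypothesis promises only \emph{some} small perturbation landing $\restr{\gamma}{N}$ in $\Gamma_N$, with no control over which hyperplane the degenerate span $\gamma_{l\setminus\{p\}}$ inflates to. Concretely, take $n=3$, $\mathcal P=\{1,2,3,4\}$, one line $l=\{1,2,3\}$, and $\gamma_1=\gamma_2=0$, $\gamma_3=e_1$, $\gamma_4=e_2$. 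Removing $p=3$, the inductive hypothesis permits $\widetilde\gamma_1=\epsilon e_3$, $\widetilde\gamma_2=\epsilon(e_2+e_3)$, $\widetilde\gamma_4=e_2+\delta e_1$, which is a realization of $N=U_{3,3}$; but then $\operatorname{span}\{\widetilde\gamma_1,\widetilde\gamma_2\}=\langle e_2,e_3\rangle$ contains no vector within distance $1$ of $\gamma_3=e_1$, so reinsertion is impossible.

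The cure is precisely the liftability machinery you mention but do not actually deploy in the induction. The route taken in \cite{Fatemeh4} is a two-phase argument: first use liftability of $M$ (and, for lower ranks, of its full-rank submatroids) to raise $\rank(\gamma)$ to $n$ while staying in $V_{\mathcal C(M)}$ and moving only infinitesimally; only once each $\gamma_l$ is an honest hyperplane does one perturb to kill the surplus dependencies. The degree-$\le 2$ hypothesis is used in that second phase, where each point is pinned to at most two hyperplanes and hence retains an $(n-2)$-dimensional subspace of freedom---your point-removal induction, by contrast, only ever uses degree $\le 1$ and never calls on the actual hypothesis. So your sketch has the right ingredients (liftability plus a genericity argument), but the order of operations matters, and the $|M|$-induction as written does not go through.
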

\begin{proof}
By Proposition~\ref{ñlh}, every full-rank submatroid of $M$ is liftable. Consequently, \textup{\cite[Proposition~4.27]{Fatemeh4}} implies that $V_M = V_{\mathcal{C}(M)}$ in this case.
\end{proof}

\begin{remark}
We note that in the previous theorem, the condition that all points of $M$ have degree at most two is necessary, as it plays a crucial role in the proof of \textup{\cite[Proposition~4.27]{Fatemeh4}}.
\end{remark}


\begin{example}
Let $M$ be the 
rank-three matroid depicted in Figure~\ref{new figure} (Left). We have $S_{M} = \{1,2,3,4,5,6\}$. Additionally, in the nilpotent chain of $M$, we have that $M_{2} = \emptyset$, hence $M$ is nilpotent. Given that all points have degree at most two, Theorem~\ref{je} implies that $V_{M} = V_{\mathcal{C}(M)}$.
\end{example}

\subsubsection{Forest configurations}
In Theorem~\ref{gc 3}, we provide a complete set of defining equations for forest configurations. In \cite{sidman2021geometric}, the Grassmann-Cayley algebra was used to create polynomials in the ideals $I_M$ of matroid varieties. A key question is how to define the ideal generated by all polynomials formed through this algebra. We will precisely define this ideal, denoted 
$G_{M} \subset I_{M}$, as it has not been formally established in the literature. This definition is motivated by the following remark.

\begin{remark}\label{tñ}
Let $M$ be a matroid of rank three on $[n]$ and let $x\in [n]$. Consider distinct lines $l_{1}\neq l_{2}\in \Li_{x}$ and points $p_{1},p_{2}\in l_{1}$, $p_{3},p_{4}\in l_{2}$. Let $P\in I_{M}$.
Since $x=l_{1}\cap l_{2}$, for any realization $\gamma \in \Gamma_{M}$, we have:
\[
\gamma_{x}=\gamma_{p_{1}}\gamma_{p_{2}}\wedge \gamma_{p_{3}}\gamma_{p_{4}}=\corch{\gamma_{p_{1}},\gamma_{p_{2}},\gamma_{p_{3}}}\gamma_{p_{4}}- \corch{\gamma_{p_{1}},\gamma_{p_{2}},\gamma_{p_{4}}}\gamma_{p_{3}}.  \]
Consequently, the polynomial  obtained from $P$ by replacing the variable $x$ 
with:
\[\corch{p_{1},p_{2},p_{3}}p_{4}-\corch{p_{1},p_{2},p_{4}}p_{3}\]
is also in $I_{M}$.
\end{remark}

We will now construct an ideal for any 
rank-three matroid, using Grassmann-Cayley algebra. 

\begin{definition}
\label{gm} 
Given a 
rank-three matroid $M$, the ideal $G_{M}$ is constructed as follows:
\begin{itemize}
\item Define the set $X_{0}$ as the polynomials generating the ideal $I_{\mathcal{C}(M)}$, i.e., the brackets corresponding to the $3$-circuits of $M$. For $j \geq 1$, recursively define the set $X_{j}$ as the polynomials obtained from those in $X_{j-1}$ by modifying some of their variables, potentially leaving them unchanged, according to the procedure described in Remark~\ref{tñ}. Note that $X_{0} \subset X_{1} \subset \ldots$.
\item For $j \geq 0$, define the ideal $I_{j}$ as the ideal generated by the polynomials in $X_{j}$. Note that $I_{0} \subset I_{1} \subset \ldots$. Since the polynomial ring is Noetherian, this chain of ideals stabilizes. We denote the stabilized ideal by $G_{M}$.
\end{itemize}
\end{definition}
\begin{remark}\label{rem ñ}
If $P$ is a polynomial in $G_{M}$, then any polynomial obtained from $P$ by modifying some of its variables according to the procedure outlined in Remark~\ref{tñ} remains in $G_{M}$.
\end{remark}

It follows from Remark~\ref{tñ} and induction that $G_{M}\subset I_{M}$. The ideal $G_{M}$ is the ideal generated by all polynomials coming from the Grassmann-Cayley algebra.


\begin{example}
Let $M$ be the 
matroid of rank three depicted in Figure~\ref{new figure} (Center). We have that $\corch{6,7,8}\in I_{0}$. Since the point $6$ lies on the lines $\{3,11,6\}$ and $\{9,10,6\}$, it follows that
\[\corch{3,11,9}\corch{10,7,8}-\corch{3,11,10}\corch{4,7,8}\in I_{1}.\] Furthermore, since the point $3$ belongs to the lines $\{1,2,3\}$ and $\{4,5,3\}$, we have 
\[\corch{10,7,8}(\corch{1,2,4}\corch{5,11,9}-\corch{1,2,5}\corch{4,11,9})-\corch{4,7,8}(\corch{1,2,4}\corch{5,11,10}-\corch{1,2,5}\corch{4,11,10})\in I_{2}.\]
Moreover, 
these polynomials belong to $I_{M}$.
\begin{figure}[H]
    \centering
    \includegraphics[width=0.8\textwidth, trim=0 0 0 0, clip]{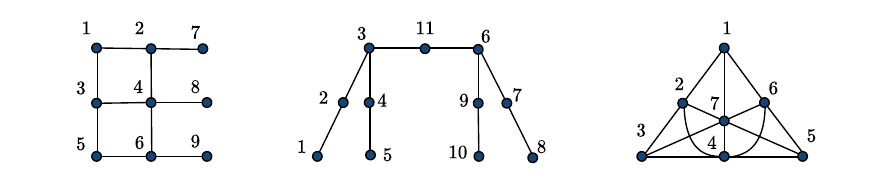}
    \caption{(Left) a nilpotent matroid; (Center) a forest configuration; (Right) Fano plane (not solvable).}
    \label{new figure}
\end{figure}
\end{example}

Consider forest 
configurations as defined in Definition~\ref{forest}. It is shown in \textup{\cite[Theorem~5.11]{clarke2021matroid}} that for forest configurations with points of degree at most two, the matroid variety and the circuit variety coincide. We now extend this analysis to configurations with points of higher degree. Here, the ideal \( G_{M} \), generated by polynomials from the Grassmann-Cayley algebra, is contained in \( I_{M} \). In Theorem~\ref{gc 3}, we will demonstrate that \( G_{M} \) and the circuit variety together define \( I_{M} \). Proving this requires establishing the following lemma.
We will first fix some notation.
\begin{notation}
For a 
rank-three matroid $M$, let $Q_{M}$ denote the set of points in $M$ with degree at least $3$ and $Q_{l}=Q_{M}\cap l$ for any line $l\in \Li_{M}$.
\end{notation}

We will need Lemma~\ref{le 1} to find the defining equations of matroid varieties associated with forest configurations. Before stating the lemma, we introduce the notion of a {\em perturbation} which refers to a motion that can be made arbitrarily small.

\begin{definition}
Consider a collection of vectors $\gamma \in \mathbb{C}^n$. Let $X$ denote a specific property. We say that a {\em perturbation} can be applied to $\gamma$ to obtain a new collection of vectors satisfying property $X$ if, for any $\epsilon > 0$, it is possible to choose, for each vector $v \in \gamma$, a vector $\widetilde{v}$ such that $\lVert v - \widetilde{v} \rVert < \epsilon$, ensuring that the new collection of vectors satisfies property $X$. 

When discussing a {\em perturbation} of a $k$-dimensional subspace $S \subset \mathbb{C}^n$, we consider $S$ as an extensor $v_1 \cdots v_k$, and apply a perturbation to $\{v_i : i \in [k]\}$ to obtain another $k$-dimensional subspace corresponding to the extensor formed by the new vectors.

The term perturbation reflects the fact that we often introduce small perturbations to a collection of vectors in order to preserve certain dependencies while eliminating others. However, a perturbation applied to a collection of vectors is simply an alteration that can be made arbitrarily small.
\end{definition}

Note that this notion of perturbation has also appeared in \cite{hocsten2004ideals}.

\begin{lemma}\label{le 1}
Let $M$ be a forest 
configuration on $[n]$. Then the following hold:
\begin{itemize}
\item[{\rm (i)}] For any $X\subset [n]$, there exists a point $x\in X$ such that $\size{\set{l\in \Li_{x}: l\cap (X\setminus \{x\})\neq \emptyset}}\leq 1.$
    \item[{\rm (ii)}] If $\gamma\in V_{\mathcal{C}(M)}$ such that $\gamma_{p}\neq 0$ for all $p\in Q_{M}$, 
    then $\gamma \in V_{M}$.
    \item[{\rm (iii)}] 
If $\gamma \in V_{\mathcal{C}(M)} \cap V(G_{M})$, then we can apply a perturbation to $\gamma$ to obtain $\tau\in \Gamma_{M}$ satisfying:
\begin{itemize}
\item For every point $p \in Q_{M}$ the vector $\tau_{p}$ is nonzero.
\end{itemize}

\end{itemize}  
\end{lemma}

\begin{proof}
(i) Suppose the contrary. Take an arbitrary point $x_{1} \in X$. By the hypothesis, there exists a line $l_{1} \in \Li_{x_{1}}$ that contains a point $x_{2} \neq x_{1} \in X$. Applying the hypothesis again, there exists a line $l_{2} \neq l_{1} \in \Li_{x_{1}}$ containing a point $x_{3} \neq x_{2} \in l_{2}$. Repeating this process, we obtain a sequence of points $x_{1}, x_{2}, \ldots$ and a sequence of lines $l_{1}, l_{2}, \ldots$ such that $x_{k} \neq x_{k+1}$, $l_{k} \neq l_{k+1}$, and $x_{k}, x_{k+1} \in l_{k}$. Since there are only a finite number of points, this process must eventually form a cycle, which contradicts the fact that $M$ is a forest.

\medskip (ii) We will prove this by showing that we can apply a perturbation to $\gamma$ to obtain $\tau\in \Gamma_{M}$, which would imply that $\gamma$ belongs to the closure $\closure{\Gamma_{M}}=V_{M}$.
We will prove this by induction on the number of lines in $M$. For the base case with no lines, it is clear that we can infinitesimally adjust the points to avoid dependencies. For the inductive step, assume the statement holds for all forest configurations with at most $m$ lines. Consider a forest $M$ with $m+1$ lines. Since $M$ is a forest, there exists a line $l$ with $\lvert S_{l} \rvert \leq 1$. Let $p \in S_{l}$. If $S_{l} = \emptyset$, choose $p$ to be any point on $l$. If $\gamma_{p} = 0$, we have two cases: if $p \in Q_{M}$, the hypothesis ensures $\gamma_{p} \neq 0$. If $\lvert \Li_{p} \rvert \leq 2$ and $\gamma_{p} = 0$, we can infinitesimally move $p$ to the intersection of the two lines containing it.

By the inductive hypothesis for $M \setminus (l\setminus \{p\})$, we can apply a perturbation to the vectors $\gamma_{q}$ for $q \in [n]\setminus (l\setminus \{p\})$ to obtain a new collection of vectors $\tau_{q}$ in the realization space of $M \setminus (l\setminus \{p\})$. Since the points $\gamma_{q}$ for $q \in l$ were on the same line and $\gamma_{p}$ was non-zero, we can infinitesimally adjust the vectors $\gamma_{q}$ for $q \in l \setminus {p}$ to align them with $\tau_{p}$ on the same line and ensure they do not lie on any other line of the configuration (as these points only belong to $l$). This yields a collection of vectors $\tau$ in $\Gamma_{M}$.

\medskip 

(iii) 
For each point $x\in [n]$ we denote by $R_{x}$ the set $\{l\in \Li_{x}:\dim(\gamma_{l})=2\}$ and consider two cases:

\medskip

\textbf{Case 1.} Suppose there exists $x \in Q_{M}$ with $\gamma_{x} = 0$ and lines $l_{1}, l_{2} \in R_{x}$ such that $\gamma_{l_{1}} \neq \gamma_{l_{2}}$. In this case, we extend $\gamma$ by defining $\tau_{x} = \gamma_{l_{1}} \cap \gamma_{l_{2}} \neq 0$ and $\tau_{p}=\gamma_{p}$ for $p\neq x$. Since $\gamma \in V(G_{M})$, all lines $\{\gamma_{l} : l \in R_{x}\}$ will contain the point $\tau_{x}$. Thus, the new collection of points is still in $V_{\mathcal{C}(M)}$ and satisfies $\tau_{x} \neq 0$. We now need to show that $\tau \in V(G_{M})$.
To prove this, consider a polynomial $r \in G_{M}$. We need to show that $r(\tau) = 0$. Choose points $p_{1}, p_{2} \in l_{1}$ and $p_{3}, p_{4} \in l_{2}$ such that
\begin{equation}\label{kh}
\tau_{x}=\gamma_{l_{1}}\wedge \gamma_{l_{2}}=\corch{\gamma_{p_{1}},\gamma_{p_{2}},\gamma_{p_{3}}}\gamma_{p_{4}}- \corch{\gamma_{p_{1}},\gamma_{p_{2}},\gamma_{p_{4}}}\gamma_{p_{3}}.
\end{equation}
Let $\overline{r}$ be the polynomial obtained from $r$ by replacing the variable $x$ with:
\[
\corch{p_{1},p_{2},p_{3}}p_{4}-\corch{p_{1},p_{2},p_{4}}p_{3}
\]
By Remark~\ref{rem ñ}, $\overline{r} \in G_{M}$. Since $\gamma \in V(G_{M})$, we have $\overline{r}(\gamma) = 0$. Using the expression for $\tau_{x}$, it follows that $r(\tau) = \overline{r}(\gamma) = 0$. 
By repeating this procedure, we will obtain a collection where no such $x$ exists. 

\medskip
\textbf{Case 2.} Suppose there does not exist $x \in Q_{M}$ with $\gamma_{x} = 0$ and lines $l_{1}, l_{2} \in R_{x}$ such that $\gamma_{l_{1}} \neq \gamma_{l_{2}}$. Let $X \subset Q_{M}$ be the set of points where $\gamma_{x} = 0$. We will prove by induction on $\lvert X \rvert$ that we can redefine the vectors $\gamma$ to non-zero vectors, resulting in a collection in $V_{\mathcal{C}(M)}$.

If $\lvert X \rvert = 0$, then $\gamma_{p} \neq 0$ for every $p \in Q_{M}$, so the result holds trivially.

For the inductive hypothesis, assume the statement holds for $\lvert X \rvert \leq m$ and let $X$ with $\lvert X \rvert = m + 1$. By applying (i), select an element $x \in X$ such that there is at most one line in $\Li_{x}$ that contains points from $X \setminus \{x\}$, and let $l_{1}$ be this line. According to the hypothesis, all lines $\{\gamma_{l} : l \in R_{x}\}$ coincide on a common line of $\mathbb{P}^{2}$, which we denote as $s$.
By the inductive hypothesis on the submatroid $M\setminus x$, we can redefine the vectors $\{\gamma_{p} : p \in X \setminus \{x\}\}$ to obtain non-zero vectors $\{\tau_{p} : p \in X \setminus \{x\}\}$ such that $\tau \in V_{\mathcal{C}(M\setminus x})$ and $\tau_{p} = \gamma_{p}$ for $p \notin X$. We then define $\tau_{x}$ as the non-zero vector in the intersection of the lines $s$ and $\tau_{l_{1}}$.
To prove that $\tau \in V_{\mathcal{C}(M)}$, we need to verify that for each line $l$, $\dim(\tau_{l}) \leq 2$. We will verify this by considering the following cases:
\begin{itemize}
\item If $l\notin \Li_{x}$ we have $\dim(\tau_{l})\leq 2$ since $\restr{\tau}{[n]\backslash \{x\}}\in V_{\mathcal{C}(M\setminus x})$.
\item If $l\in R_{x}$ we have $\dim(\tau_{l})\leq 2$ since $\tau_{x}\in s$.
\item If $l=l_{1}$ we have $\dim(\tau_{l})\leq 2$ since $\tau_{x}\in \tau_{l_{1}}$.
\item If $l \in \Li_{x} \setminus (R_{x} \cup {l_{1}})$, then by the definition of $R_{x}$, we have $\dim(\gamma_{l}) \leq 1$. Since $x$ is the only point on $l$ from $X$, it follows that $\dim(\tau_{l}) \leq 2$ in this case.
\end{itemize}
The resulting collection has no loops at the points of $Q_{M}$, thus completing the proof.
\end{proof}

Now we proceed to state and prove the main result of this section. 

\begin{theorem}\label{gc 3}
Let $M$ be a forest configuration. Then 
$I_{M}=\sqrt{I_{\mathcal{C}(M)}+G_{M}}.$
\end{theorem}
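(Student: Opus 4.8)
The plan is to prove the two inclusions $\sqrt{I_{\mathcal{C}(M)}+G_{M}}\subseteq I_{M}$ and $I_{M}\subseteq\sqrt{I_{\mathcal{C}(M)}+G_{M}}$ separately, with all the content lying in the second. For the first inclusion, recall that $I_{\mathcal{C}(M)}\subseteq I_{M}$ by the definition of the circuit ideal and $G_{M}\subseteq I_{M}$ by Proposition~\ref{inc G}, so $I_{\mathcal{C}(M)}+G_{M}\subseteq I_{M}$; since $I_{M}=I(V_{M})$ is the ideal of a variety it is radical, whence $\sqrt{I_{\mathcal{C}(M)}+G_{M}}\subseteq I_{M}$.

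For the second inclusion I would pass through the Nullstellensatz: it is equivalent to the set-theoretic containment
\[
V(I_{\mathcal{C}(M)}+G_{M})\;=\;V_{\mathcal{C}(M)}\cap V(G_{M})\;\subseteq\;V_{M}\;=\;\overline{\Gamma_{M}}.
\]
Since $\Gamma_{M}$ is constructible, its Euclidean and Zariski closures coincide, so it suffices to show that every $\gamma\in V_{\mathcal{C}(M)}\cap V(G_{M})$ can be perturbed by an arbitrarily small amount into $\Gamma_{M}$. Here is where Lemma~\ref{le 1} does the heavy lifting. Given such a $\gamma$, I would first apply Lemma~\ref{le 1}(iii) to obtain an infinitesimal perturbation $\tau\in V_{\mathcal{C}(M)}$ of $\gamma$ whose only possible zero vectors sit at points outside $P_{M}$, i.e.\ at points of degree at most two; this is the step that genuinely uses $\gamma\in V(G_{M})$, since the Grassmann--Cayley relations force the spans of all lines through a point of degree $\geq 3$ to pass through a common point, which is what lets such a point be assigned a nonzero vector consistently. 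Next, the residual zeros at low-degree points are cleared by one further infinitesimal perturbation within $V_{\mathcal{C}(M)}$ --- precisely as inside the induction proving Lemma~\ref{le 1}(ii): a zero at a degree-two point is moved to a small nonzero vector of the intersection of the spans of its two lines, and zeros at points of degree $\leq 1$ are handled even more easily. Once every vector is nonzero, Lemma~\ref{le 1}(ii) supplies a further infinitesimal perturbation landing in $\Gamma_{M}$. Composing these perturbations shows $\gamma\in\overline{\Gamma_{M}}=V_{M}$, which completes the proof.

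The hard part is essentially all packaged inside Lemma~\ref{le 1}, which may be assumed: part~(ii) is a genericity statement proved by a somewhat delicate induction on the number of lines exploiting the forest structure (through Proposition~\ref{fore} and part~(i) of that lemma), while part~(iii) is the place where the ideal $G_{M}$ is put to use. After that, the theorem is a short assembly --- Nullstellensatz, radicality of $I(V_{M})$, and the bookkeeping of zero vectors (making sure that part~(iii) is applied before part~(ii), and that the leftover low-degree loops are removed without leaving $V_{\mathcal{C}(M)}$). I would flag that last bookkeeping step as the only subtle point in the assembly; note that the realizability and irreducibility of $V_{M}$, which hold because forests are nilpotent (Corollary~\ref{incl}, Theorem~\ref{nil rea}), are not needed here beyond the fact that $V_{M}$ is nonempty.
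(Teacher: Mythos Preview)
Your proposal is correct and follows essentially the same route as the paper: establish the easy inclusion via Proposition~\ref{inc G} and radicality of $I_{M}$, then prove $V_{\mathcal{C}(M)}\cap V(G_{M})\subseteq \overline{\Gamma_{M}}$ by first invoking Lemma~\ref{le 1}(iii) and then Lemma~\ref{le 1}(ii). The paper's write-up passes directly from (iii) to (ii) without your intermediate step of clearing the residual zeros at points of degree $\leq 2$; in fact the proof of (ii) already absorbs that case inside its induction (so the stated hypothesis ``$\gamma_{p}\neq 0$ for every $p$'' is stronger than what the argument actually needs), and your extra bookkeeping simply makes this explicit.
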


\begin{proof}
Let $I$ denote the ideal on the right. 
Since $G_{M}\subset I_{M}$, we have $I \subseteq I_{M}$. To prove the reverse inclusion, we show $V(I) \subseteq V_{M} = \closure{\Gamma_{M}}$. 
Let $\gamma$ be a collection of vectors in $V_{\mathcal{C}(M)} \cap V(G_{M})$. By Lemma~\ref{le 1}(iii), we can adjust the points $\{\gamma_{x}: x \in Q_{M} \text{ and } \gamma_{x} = 0\}$ to obtain a collection $\tau \in V_{\mathcal{C}(M)}$ that is a perturbation of $\gamma$ with $\tau_{x} \neq 0$ for all $x \in Q_{M}$. By Lemma~\ref{le 1}(ii), we can further apply a perturbation to $\tau$ to obtain 
$\kappa \in \Gamma_{M}$. Thus, $\gamma$ is in the Euclidean closure of $\Gamma_{M}$, implying $\gamma \in V_{M}$.
\end{proof}

\begin{remark}
Note that the condition of $M$ being a forest in Theorem~\ref{gc 3} is essential and cannot be weakened. This is clear in the proof of Lemma~\ref{le 1}, where the absence of cycles is crucial for establishing each of the three statements.
\end{remark}

\begin{example}
Let $M$ be the 
matroid of rank three depicted in Figure~\ref{fig:combined} (Left), which is clearly a forest configuration.  By applying Theorem~\ref{gc 3}, we find that
\[I_{M}=\sqrt{\is{\corch{1,2,7},\corch{3,4,7},\corch{5,6,7},\corch{1,2,3}\corch{4,5,6}-\corch{1,2,4}\corch{3,5,6}}}.\]
\end{example}

 \subsubsection{Defining equations}
In this subsection, 
we show that for a nilpotent matroid $M$, the dimension of liftings of a realization to a collection in $V_{\mathcal{C}(M)}$ is constant, independent of the realization or the point of lifting. We will use this result to construct polynomials in the ideal of matroid varieties in Theorem~\ref{25}.

We now extend the concept of the \textit{liftability matrix} to all matroids, whereas it was previously defined only for paving matroids in \textup{\cite{Fatemeh4}}. Recall Notation~\ref{initial notation}.

\smallskip
\begin{definition}[Liftability matrix]\label{lif mat}
Let $M$ be a matroid of rank $r$ on $[n]$, and let $q\in \mathbb{C}^{r}$. We define the \textit{liftability matrix} $\mathcal{M}_{q}(M)$ of $M$ and $q$, as the matrix with columns indexed by the points $[n]$ and rows indexed by pairs $(c,K)$, where $c$ is a circuit of $M$  
and $K\subset \textstyle \binom{[r]}{k}$, with $k$ denoting the size of $c$. The entries of this matrix are defined as follows: for each circuit $c=\{c_{1},c_{2},\ldots,c_{k}\}$ and $K\in \textstyle \binom{[r]}{k}$, the $c_{i}^{\text{th}}$ coordinate of the corresponding row is given by the polynomial 
\[ 
(-1)^{i-1}[c_{1},c_{2},\ldots,c_{i-1},\hat{c_{i}},c_{i+1},\ldots, c_{k},q]_{K},
\]
where the subindex $K$ denotes the submatrix with rows corresponding to $K$. The other entries of the row are set to $0$.
Note that the entries of the matrix are polynomials, not numbers. For a collection of vectors $\gamma=\{\gamma_{p}:p\in [n]\}$ of $\CC^{r}$, we denote $\mathcal{M}_{q}^{\gamma}(M)$ as the matrix obtained by evaluating the entries of $\mathcal{M}_{q}(M)$ at the vectors of $\gamma$, i.e.~substituting the vector of $r$ indeterminates associated with each $p\in [n]$ by the vector $\gamma_{p}$.
\end{definition}

\begin{example}
Consider the quadrilateral set $QS$, illustrated in Figure~\ref{fig:combined} (Center). The set of points of $QS$ is $[6]$, and its circuits of size $3$ are given by $\set{\{1,2,3\},\{1,5,6\},\{2,4,6\},\{3,4,5\}}$. Thus, for any $q\in \CC^{3}$, its liftability matrix is the following:
\begin{equation}\label{matrix quad}\mathcal{M}_{q}(QS)=\begin{pmatrix}
\corch{2,3,q} & -\corch{1,3,q} & \corch{1,2,q} &0 &0 &0 \\
\corch{5,6,q} & 0 &0 &0 &-\corch{1,6,q} &\corch{1,5,q} \\ 
 0  &\corch{4,6,q} &0 &-\corch{2,6,q} & 0&\corch{2,4,q}    \\
 0& 0 & \corch{4,5,q} &-\corch{3,5,q}& \corch{3,4,q}& 0
\end{pmatrix}.
\end{equation}
\end{example}

For the remainder of this subsection, $M$ represents a rank $r$ matroid on $[n]$.
We have the following relation between the liftability matrix and the liftings of vectors. 

\begin{lemma}\label{lift eq}
Let $\gamma \in V_{\mathcal{C}(M)}$ be a collection of vectors in $\mathbb{C}^{r}$ and $q \in \mathbb{C}^{r}$. The following statements are equivalent: 
\begin{itemize}
    \item A vector $(z_{p})_{p \in [n]} \in \mathbb{C}^{n}$ belongs to the kernel of $\mathcal{M}_{q}^{\gamma}(M)$.
    \item The lifted vectors
$\delta = \{\delta_{p} = \gamma_{p} + z_{p} \cdot q : p \in [n]\}$
are in $V_{\mathcal{C}(M)}$.
\end{itemize}
\end{lemma}
\begin{proof}
The collection $\delta$ belongs to $V_{\mathcal{C}(M)}$ if and only if the vectors 
 $\{\gamma_{c_{i}} + z_{c_{i}} \cdot q: i\in [k]\}$ are dependent for every circuit $c = \{c_{1}, \ldots, c_{k}\} \in \mathcal{C}(M)$ with $k \leq r$. This is equivalent to all the determinants of the $k \times k$ submatrices of the matrix formed by these vectors being zero.
Consider the $k \times k$ submatrix corresponding to a subset $K \in \textstyle \binom{[r]}{k}$. The determinant of this submatrix is:
\[
\det(\gamma_{c_{1}}+z_{1}\cdot q,\ldots,\gamma_{c_{k}}+z_{k}\cdot q)_{K}=
\sum_{i=1}^{k} z_{i}(-1)^{k-i}\det(\gamma_{c_{1}},\ldots,\hat{\gamma}_{c_{i}},\ldots, \gamma_{c_{k}},q)_{K}=0.
\]
The first equality holds because $\gamma \in V_{\mathcal{C}(M)}$. The coefficients multiplying $z_{c_{i}}$ in this expression are exactly the non-zero entries of the row corresponding to $(c, K)$ in the matrix $\mathcal{M}_{q}^{\gamma}(M)$ (after multiplying them by $(-1)^{k-1}$). Hence, the collection of vectors $\delta$ is in $V_{\mathcal{C}(M)}$ if and only if the vector $z = \{z_{p}: p \in [n]\}$ is in the kernel of the matrix $\mathcal{M}_{q}^{\gamma}(M)$.
\end{proof}

\begin{definition}
For a collection of vectors $\gamma\in V_{\mathcal{C}(M)}$ in $\CC^{r\rq}$ with $r\rq\geq r$ and a vector $q\in \CC^{r\rq}$ we will denote by $\dim_{q}\pare{\gamma}$ the dimension of the liftings of the vectors of $\gamma$ to a collection in $V_{\mathcal{C}(M)}$ from the point $q$. Using  Lemma~\ref{lift eq} we know that in the case of $r\rq=r$ this number coincides with the dimension of the kernel of the matrix $\mathcal{M}_{q}^{\gamma}\pare{M}$.  
\end{definition}

We now show that Lemma~\ref{lift eq} does not hold when the dimension of the ambient space differs from the rank of $M$.

\begin{lemma}\normalfont\label{22}
Let $\gamma\in \Gamma_{M}$ in $\CC^{r\rq}$ with $r\rq> r$ and $q\in \CC^{r\rq}\setminus \langle\gamma_{p}:p\in [n]\rangle$. Then $\dim_{q}(\gamma)=r$.
\end{lemma}

\begin{proof}
Let $V=\langle\gamma_{p}:p\in [n]\rangle$ and suppose that $\gamma_{q_{1}},\ldots,\gamma_{q_{r}}$ is a basis of $V$. Consider a lifting $\{\tau_{p}=\gamma_{p}+z_{p}\cdot q:p\in [n]\}$ of $\gamma$. For any choice of the numbers $\{z_{q_{1}},\ldots,z_{q_{r}}\}$,  consider $H$ as the subspace of dimension $r$ generated by the vectors 
$\{\tau_{q_{i}}:i\in [r]\}$. 

\medskip

Note that $q \notin H$, and $H$ is a hyperplane in the subspace $V + \langle q \rangle$. For $\tau$ to be in $V_{\mathcal{C}(M)}$, each vector $\tau_{p}$ must be the projection of $\gamma_{p}$ onto $H$ from the vector $q$. Conversely, any such lifting belongs to $V_{\mathcal{C}(M)}$ because the vectors in $\tau$ are derived by projecting the vectors in $\gamma$ onto the hyperplane $H$ in the subspace $V + \langle q \rangle$.
Therefore, since the coefficients $z_{q_{1}}, \ldots, z_{q_{r}}$ were chosen arbitrarily, we conclude that the dimension of the kernel is exactly $r$, as required.
\end{proof}

\begin{notation}
Let $M$ be a matroid on $[n]$. We denote by $M(0)$ the set of points $\{p\in [n]:\Li_{p}=\emptyset\}$.
\end{notation}

\begin{lemma}\label{23}
For any collection $\gamma \in \Gamma_{M}$ 
and $q\in\CC^{r}$ not in the span of $\gamma$, we have: 
\[\dim_{q}(\gamma)=\dim_{q}(\restr{\gamma}{S_{M}})+\sum_{l\in \Li}\rank(l)-\rank(S_{l})+\size{M(0)}.\]

\end{lemma}

\begin{proof}
 For each subspace $l\in \Li$ denote by $b_{l}$ the number $\rank(S_{l})$ and let $p_{l_{1}},\ldots,p_{l_{b_{l}}}$ be a basis of $S_{l}$.\\
Consider any lifting $\delta\in V_{\mathcal{C}(S_{M})}$ of the vectors $\restr{\gamma}{S_{M}}$. For each $l\in \Li$ consider points $r_{l_{1}},\ldots,r_{l_{\rank(l)-b_{l}}}$ of $l\setminus S_{l}$. It is clear that the points $\{p_{l_{1}},\ldots,p_{l_{b_{l}}},r_{l_{1}},\ldots,r_{l_{\rank(l)-b_{l}}}\}$ form a basis of $\closure{l}$. We extend $\delta$ to the points $r_{l_{i}}$ by considering an arbitrary lifting of the vectors $\gamma_{r_{l_{i}}}$. For ease of notation, we will denote the lifted vectors by $\delta$. Observe that for every $l\in \Li$, the vectors 
$\{\gamma_{p_{l_{1}}},\ldots,\gamma_{p_{l_{b_{l}}}},\gamma_{r_{l_{1}}},\ldots,\gamma_{r_{l_{\rank(l)-b_{l}}}}\}$
are independent. 
Therefore, the vectors 
$\delta_{p_{l_{1}}},\ldots,\delta_{p_{l_{b_{l}}}},\delta_{r_{l_{1}}},\ldots,\delta_{r_{l_{\rank(l)-b_{l}}}}$
are also independent, since they are obtained from a lifting of the vectors of $\gamma$ from the point $q$ outside $\gamma$. Hence, to extend the lifting $\delta$ to the remaining points of $M$, 
for 
each of the remaining points $p\in l$, 
$\gamma_{p}$ has to be lifted to the vector
$$\is{\gamma_{p},q}\cap \is{\delta_{p_{l_{1}}},\ldots,\delta_{p_{l_{b_{l}}}},\delta_{r_{l_{1}}},\ldots,\delta_{r_{l_{\rank(l)-b_{l}}}}}.$$
Since all these vectors lie in the $\rank(l)+1$ dimensional space $\is{\gamma_{l},q}$ and $\is{\delta_{p_{l_{1}}},\ldots,\delta_{p_{l_{b_{l}}}},\delta_{r_{l_{1}}},\ldots,\delta_{r_{l_{\rank(l)-b_{l}}}}}$ forms a hyperplane within this space, the intersection is non-empty. Consequently, any lifting of $\gamma$ to $V_{\mathcal{C}(M)}$ is fully determined by first lifting $\restr{\gamma}{S_{M}}$ to a collection in $V_{\mathcal{C}(S_{M})}$, then choosing arbitrary liftings for the $\rank(l) - b_{l}$ points $r_{l_{1}}, \ldots, r_{l_{\rank(l) - b_{l}}}$ for each $l \in \Li$, and finally selecting arbitrary liftings for the points corresponding to $M(0)$. This process ensures that the desired equality holds.
\end{proof}

Using Lemmas~\ref{22} and~\ref{23}, we establish the following proposition that for a nilpotent matroid $M$, the dimension of the liftings of a realization of $M$ to a collection in the circuit variety is constant. This dimension is independent of both the specific realization of $M$ and the point of lifting.

\begin{proposition}\label{24}
Let $M$ be a nilpotent matroid, and let $\gamma \in \Gamma_{M}$ be a collection of vectors in $\mathbb{C}^{r}$. For a point $q \in \mathbb{C}^{r}$ not in the span of $\gamma$, the dimension $\dim_{q}(\gamma)$ is independent of both the choice of $q$ and the realization $\gamma$. We denote this fixed dimension by $\dim(M)$.
\end{proposition}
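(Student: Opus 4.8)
The plan is to induct on the length $l_n(M)$ of the nilpotent chain of $M$, using Lemma~\ref{23} to peel off one layer of the chain at a time and Lemma~\ref{22} to handle the base case. Recall that $M$ being weak-nilpotent means its nilpotent chain reaches a submatroid of rank strictly less than $\rank(M)$; in particular, iterating the $S_{(-)}$ operation eventually produces a matroid with no dependent subspaces.

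\medskip\noindent\textbf{Base case.} Suppose $S_M = \emptyset$, i.e.\ every point of $M$ has degree at most one, so $M = M(0) \sqcup \bigsqcup_{l\in\mathcal L} l$ with all subspaces disjoint. If moreover $M$ itself has rank $d < \rank(M)$ — this is the situation forced at the bottom of the chain — then for $\gamma \in \Gamma_M \subset \CC^{\rank(M)}$ the vectors span a $d$-dimensional subspace, and for $q$ outside $\gamma$ Lemma~\ref{22} gives $\dim_q(\gamma) = d$, independent of $\gamma$ and $q$. More generally, when $S_M = \emptyset$ but $M$ has full rank, Lemma~\ref{23} degenerates (the $S_M$-term is vacuous) to $\dim_q(\gamma) = \sum_{l\in\mathcal L}\rank(l) + \size{M(0)}$, which is manifestly a combinatorial quantity independent of $\gamma$ and $q$. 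Either way the base case holds with $\dim(M)$ equal to this explicit combinatorial number.

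\medskip\noindent\textbf{Inductive step.} Assume the statement holds for all weak-nilpotent matroids whose nilpotent chain is shorter, and let $M$ be weak-nilpotent with $S_M \neq \emptyset$. First observe $S_M$ is itself weak-nilpotent: its nilpotent chain is the tail $M_1 \supset M_2 \supset \cdots$ of the chain of $M$, hence still reaches a submatroid of rank $< \rank(M) = \rank(S_M)$ (here one must check $\rank(S_M) = \rank(M)$, which holds because $S_M \neq \emptyset$ forces $S_M$ to have full rank, as in Lemma~\ref{equiv 2}; if instead $\rank(S_M) < \rank(M)$ one is already in a base-case-type situation and argues directly). Now fix $\gamma \in \Gamma_M$ in $\CC^{\rank(M)}$ and $q$ outside $\gamma$. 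Apply Lemma~\ref{23}:
\[
\dim_q(\gamma) = \dim_q\!\big(\restr{\gamma}{S_M}\big) + \sum_{l\in\mathcal L}\big(\rank(l) - \rank(S_l)\big) + \size{M(0)}.
\]
The restriction $\restr{\gamma}{S_M}$ is a realization of $S_M$ in $\CC^{\rank(M)} = \CC^{\rank(S_M)}$, and $q$ is outside $\restr{\gamma}{S_M}$ as well (any vector outside $\gamma$ is in particular outside the sub-collection). By the inductive hypothesis applied to the weak-nilpotent matroid $S_M$, the term $\dim_q(\restr{\gamma}{S_M})$ equals the constant $\dim(S_M)$, independent of $\gamma$ and $q$. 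The remaining summands $\sum_{l}(\rank(l)-\rank(S_l)) + \size{M(0)}$ are purely combinatorial invariants of $M$. Hence $\dim_q(\gamma)$ is independent of both choices, and we set
\[
\dim(M) = \dim(S_M) + \sum_{l\in\mathcal L}\big(\rank(l) - \rank(S_l)\big) + \size{M(0)}.
\]

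\medskip\noindent\textbf{Main obstacle.} The substantive point is not the recursion itself but verifying that the hypotheses of Lemma~\ref{23} are genuinely met at each stage — in particular that $q$, chosen outside $\gamma$, remains outside the restricted collection $\restr{\gamma}{S_M}$, and that $\restr{\gamma}{S_M}$ is a bona fide realization of $S_M$ (not merely a point of $V_{\mathcal C(S_M)}$) so that the inductive hypothesis, which is stated for $\Gamma_{S_M}$, actually applies. The first is immediate from the definition of ``outside.'' For the second, one uses that $\gamma \in \Gamma_M$ realizes all dependencies of $M$ exactly, so its restriction to the subset $S_M$ realizes precisely the dependencies of the submatroid $S_M$; combined with the fact that $S_M$ has full rank $\rank(M)$ when nonempty, $\restr{\gamma}{S_M}$ spans $\CC^{\rank(M)}$ and lies in $\Gamma_{S_M}$. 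One should also confirm that the nilpotent chain of $S_M$ is strictly shorter than that of $M$ so the induction is well-founded, which is immediate from $l_n(S_M) = l_n(M) - 1$.
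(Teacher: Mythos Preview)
Your approach is the paper's: peel off layers of the nilpotent chain via Lemma~\ref{23} and close out with Lemma~\ref{22} once the rank drops. The paper does not set this up as an induction; it simply lets $M_k$ be the first term of the chain with $\rank(M_k)<\rank(M)$, applies Lemma~\ref{23} for each $j=1,\dots,k$ to get $\dim_q(\restr{\gamma}{M_{j-1}})=\dim_q(\restr{\gamma}{M_j})+c_j$, telescopes, and finishes with Lemma~\ref{22} applied to $\restr{\gamma}{M_k}$.

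Where your write-up wobbles is the induction parameter. You induct on $l_n(M)$, but that quantity is defined only for nilpotent matroids (Definition~\ref{k2}); a weak-nilpotent $M$ need not satisfy $M_j=\emptyset$ for any $j$, so the chain can fail to terminate while still containing a term of rank below $\rank(M)$. Your sentence ``iterating $S_{(-)}$ eventually produces a matroid with no dependent subspaces'' is therefore not justified in the weak-nilpotent setting. Likewise, the assertion that $S_M\neq\emptyset$ forces $\rank(S_M)=\rank(M)$ is false, and Lemma~\ref{equiv 2} does not say this; your parenthetical already concedes the point and gestures at the right fix. Finally, the phrase ``$M$ itself has rank $d<\rank(M)$'' in the base case is incoherent as written.

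The clean repair is to induct on the smallest $k$ with $\rank(M_k)<\rank(M)$ (which exists precisely by weak-nilpotency): if $k=1$, apply Lemma~\ref{23} once and then Lemma~\ref{22}; if $k>1$, then $\rank(S_M)=\rank(M)$ genuinely holds, $S_M$ is weak-nilpotent with index $k-1$, and your inductive step goes through verbatim. Equivalently, drop the induction and unroll the recursion into a finite sum as the paper does.
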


\begin{proof}
Let 
\[M_{0}\supset M_{1}\supset \cdots\]
be the nilpotent chain of $M$  and let $M_{k}$ be the first subset of the chain not of full rank. Applying Lemma~\ref{23} we have that $\textstyle{\dim_{q}(\restr{\gamma}{M_{j-1}})=\dim_{q}(\restr{\gamma}{M_{j}})+c_{j}}$
for every $j\in [k]$, where $c_{j}$ is the constant 
$\textstyle{c_j= \sum_{l\in \Li_{M_{j-1}}}\rank(l)-\rank(S_{l})+\size{M_{j-1}(0)}}.$
Applying Lemma~\ref{22} we also have that $\dim_{q}(\restr{\gamma}{M_{k}})=\rank(M_{k}).$ Therefore, we have that
\begin{equation}\label{ktc}
\dim_{q}(\gamma)=\sum_{j=1}^{k}c_{j}+\rank(M_{k}),
\end{equation}
which is a constant independent of both $q$ and $\gamma$. This completes the proof.
\end{proof}

Proposition~\ref{24} 
provides a recursive formula for computing $\dim(M)$ for a nilpotent matroid.

\begin{example}\label{ñr}
Let $M$ be the $4$-paving matroid on the ground set $[11]$ with dependent hyperplanes
\[\Li=\{\{1,2,3,4\},\{1,2,5,6\},\{1,3,5,7\},\{1,5,4,8\},\{2,3,5,9\},\{2,6,10,11\}\}.
\]
We have the following chain of submatroids \[M_{0}=M,\ M_{1}=\{1,2,3,4,5,6\},\ M_{2}=\{1,2\},\ M_{3}=\emptyset.\]
Let~$c_{1}$ and $c_{2}$ denote the constants from \eqref{ktc}. Since $M_{1}$ has dependent hyperplanes $\Li_{M_{1}}=\{\{1,2,3,4\},\{1,2,5,6\}\}$ and  $S_{M_{1}}=\{1,2\}$ we have that $c_{2}=2$. Similarly, it is straightforward to see that $c_{1}=1$. We also have that $\rank(M_{2})=2$, and putting all of this together, we get that $\dim(M)=c_{1}+c_{2}+\rank(M_{2})=5$.
\end{example}

Using Proposition~\ref{24} we have the following theorem for nilpotent matroids.

\begin{theorem}\label{25}
Let $M$ be a nilpotent matroid of rank $r$ on $[n]$. Then, the $n-\dim(M)+1$ minors of the matrix $\mathcal{M}_{q}(M)$ lie in $I_{M}$ for any $q\in \CC^{r}$.
\end{theorem}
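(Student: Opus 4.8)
The plan is to prove the stronger statement that every $\size{M}-\dim(M)+1$ minor of $\mathcal{M}_{q}(M)$, viewed as a polynomial in the entries of the generic $n\times\size{M}$ matrix, vanishes at every point of the realization space $\Gamma_{M}=\Gamma_{M,n}$. Since $I_{M}=I(V_{M})=I(\overline{\Gamma_{M}})$ and $\Gamma_{M}$ is Zariski dense in $V_{M}$, this is exactly the asserted membership. So I would fix $q\in\CC^{n}$ and a realization $\gamma\in\Gamma_{M}$ and reduce the problem to showing $\rank\mathcal{M}_{q}^{\gamma}(M)\le \size{M}-\dim(M)$; since $\mathcal{M}_{q}^{\gamma}(M)$ has $\size{M}$ columns, by rank--nullity this is the same as $\dim\ker\mathcal{M}_{q}^{\gamma}(M)\ge\dim(M)$.

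The first case to treat is that of $q$ \emph{outside} $\gamma$, i.e. $q\notin\gamma_{l}$ for all $l\in\mathcal{L}$. In this case Lemma~\ref{lift eq} identifies $\dim\ker\mathcal{M}_{q}^{\gamma}(M)$ with $\dim_{q}(\gamma)$, the dimension of the liftings of $\gamma$ to $V_{\mathcal{C}(M)}$ from $q$ --- here the hypothesis that the ambient space equals $\CC^{\rank(M)}$ is satisfied because $\gamma\in\Gamma_{M}$. Proposition~\ref{24} then yields $\dim_{q}(\gamma)=\dim(M)$. Hence $\rank\mathcal{M}_{q}^{\gamma}(M)=\size{M}-\dim(M)$ exactly, and in particular every $\size{M}-\dim(M)+1$ minor of $\mathcal{M}_{q}^{\gamma}(M)$ vanishes.

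For an arbitrary $q\in\CC^{n}$, I would deduce the vanishing by a specialization argument. Keeping $\gamma$ fixed, every entry of $\mathcal{M}_{q}^{\gamma}(M)$ is a bracket $[\ldots,q]_{K}$ and so is affine-linear in the coordinates of $q$; therefore the locus $\{q\in\CC^{n}:\text{every }\size{M}-\dim(M)+1\text{ minor of }\mathcal{M}_{q}^{\gamma}(M)\text{ is zero}\}$ is Zariski closed. By the previous step this closed set contains $\CC^{n}\setminus\bigcup_{l\in\mathcal{L}}\gamma_{l}$, the complement of finitely many proper linear subspaces, which is Zariski dense. Hence the closed set is all of $\CC^{n}$, so the minors vanish for every $q$.

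Combining the two cases, for each $q\in\CC^{n}$ every $\size{M}-\dim(M)+1$ minor of $\mathcal{M}_{q}(M)$ vanishes at every $\gamma\in\Gamma_{M}$, hence on $\overline{\Gamma_{M}}=V_{M}$, and therefore lies in $I_{M}$. The entire content of the argument is carried by Proposition~\ref{24}, which forces the generic lift dimension to equal the single constant $\dim(M)$, together with Lemma~\ref{lift eq}, which turns that into a rank statement for $\mathcal{M}_{q}^{\gamma}(M)$; the only point that requires attention is the passage from generic $q$ to all $q$, and I do not expect a genuine obstacle beyond correctly invoking the ambient-dimension hypotheses of those two results.
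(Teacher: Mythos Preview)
Your proposal is correct and follows essentially the same route as the paper's own proof: fix $\gamma\in\Gamma_{M}$, use Proposition~\ref{24} (via Lemma~\ref{lift eq}) to get $\dim\ker\mathcal{M}_{q}^{\gamma}(M)=\dim(M)$ for $q$ outside $\gamma$, convert this to the rank bound, and then pass to all $q\in\CC^{n}$ by density. Your write-up is in fact a bit more explicit than the paper's (you spell out the role of Lemma~\ref{lift eq} and the Zariski-closure argument in $q$), but the underlying argument is identical.
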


\begin{proof}
Let $\gamma \in \Gamma_{M}$. Applying Proposition~\ref{24}, we have that 
$\dim(\ker(\mathcal{M}_{q}^{\gamma}(M)))=\dim(M)$
for any vector $q$ outside $\gamma$.  Consequently for these vectors $q$, we have $\rank(\mathcal{M}_{q}^{\gamma}(M))=n-\dim(M)$, implying that the $n-\dim(M)+1$ minors of the matrix $\mathcal{M}_{q}(M)$ vanish at $\gamma$. This holds for any vector $q$ outside $\gamma$ and since the set of vectors $q$ outside $\gamma$ is a dense subset of $\CC^{n}$, we get that the the $n-\dim(M)+1$ minors of the matrix $\mathcal{M}_{q}(M)$ vanish at $\gamma$ for every $q\in \CC^{r}$.
Since $\gamma$ is an arbitrary realization of $M$ we get the desired result.  
\end{proof}

\section{Solvable matroids}\label{solv}
In this section, we introduce a new class of matroids, which we refer to as 
\textit{solvable matroids}. 
Our main result is Theorem~\ref{irreducible}, which establishes the irreducibility of realization spaces of paving matroids that have no points of degree greater than two. Additionally, we prove the irreducibility of the associated varieties of the solvable matroids of rank three.

\subsection{Definition and basic properties}\label{solv 1}

We will first fix our notation throughout this subsection. Let $M$ be a matroid of rank $r$ on $[n]$.

\begin{notation}\label{m6}
\begin{itemize}
    \item For a point $p\in [n]$ we define  \begin{equation}\label{am}
a_{p}=\sum_{l\in \Li_{p}} \ \rank(l) -r(\size{\Li_{p}}-1).
\end{equation}
We define $Q_{M}=\{p\in [n]:\ a_{p}\leq 0\}$  
and 
$Q_{l}=Q_{M}\cap l$  
for any subspace $l\in \Li_M$. 

\item For any $\gamma \in V_{\mathcal{C}(M)}$, we denote the subspace 
$\textstyle\bigcap_{l \in \Li_{p}}\gamma_{l}$ by $V_{\gamma,p}$.
\end{itemize}
\end{notation}

The motivation behind defining $a_p$ in \eqref{m6} is that $Q_{M}$ contains all the points $p \in [n]$ for which, given any realization of $M$, we can \textit{expect} the intersection of the subspaces associated with the elements of $\Li_{p}$ to have a specific dimension. This expected dimension is denoted by $a_{p}$.


\begin{remark}
\begin{itemize}
\item If $M$ is an $r$-paving matroid, then $a_{p} = r - \left|\Li_{p}\right|$. Thus, $p \in Q_{M}$ if and only if $\left|\Li_{p}\right| \geq r$. In particular, for a 
matroid of rank three, $p \in Q_{M}$ if and only if three lines of $\Li$ contain this point.
\item From Definition~\ref{m6}, it follows that $Q_{N} \subset Q_{M}$ for any submatroid $N \subset M$. Note that when considering $Q_{N}$ for a submatroid $N$, Equation~\eqref{am} uses the number $\rank(N)$ in place of $\rank(M)$.
\end{itemize}
\end{remark}

We now introduce the family of \textit{solvable matroids}.

\begin{definition}[Solvable matroid]\normalfont\label{m7}
Let $M$ be a matroid. We define the \textit{solvable chain} of $M$ as the following chain of submatroids of $M$: 
$$M^{0}=M,\quad  M^{1}=Q_{M}, \ \text{and for every $j\geq 1,$}\quad  M^{j+1}=Q_{M^{j}}.$$
We call $M$ \textit{solvable} if $M^{j}=\emptyset$ for some $j$. In this case, we denote the length of the chain by $l_{s}(M)$.
\end{definition}

Note that in the preceding definition, we are identifying each subset of points $M^{j}$ with the submatroid it defines.

\begin{example}
The rank-three matroids 
depicted in Figure~\ref{fig:combined} are solvable. 
The configuration in the center has a solvability chain of length 1, while the other two configurations each have a solvability chain of length 2. An example of a non-solvable matroid is the Fano plane, illustrated in Figure~\ref{new figure} (Right).
The Fano plane is not solvable because all of its points have degree $3$.
\end{example}

We also obtain a characterization of solvable matroids using the same argument as in Lemma~\ref{equiv 2}.

\begin{lemma}\label{equiv}
Let $M$ be a matroid. Then the following hold:
\begin{itemize}
    \item[{\rm (i)}] $M$ is not \textit{solvable} if and only if there exists a submatroid $\emptyset \subsetneq N \subset M$ with $Q_{N}=N$. 
    \item[{\rm (ii)}] Assume that $M$ is an $r$-paving matroid. Then $M$ is \textit{solvable} if and only if every submatroid $N$ of full rank, with at least one dependent hyperplane, has a dependent hyperplane $l$ with $\size{Q_{l}}\leq r-1$.
\end{itemize} 
\end{lemma}

\begin{example}\label{ktu}
For an $r$-paving matroid $M$, $Q_{M}$ is the submatroid formed by the points of degree at least $r$, and $M$ is solvable if and only if the sequence obtained by recursively taking the submatroid formed by points of degree at least $r$ eventually terminates in the empty set. 
By Lemma~\ref{equiv}, $M$ 
is solvable if and only if there is no submatroid $N$ such that all its points have degree at least~$r$.
\end{example}

\begin{example}\label{ktu 2}
In particular, by Example~\ref{ktu}, we know that any paving matroid in which all points have degree at most two is solvable, as in this case $Q_{M} = \emptyset$.
\end{example}

\begin{remark}\label{ktv}
Example~\ref{ktu} illustrates that the class of solvable matroids is notably broad. It is conjectured in \cite{mayhew2011asymptotic} that asymptotically, almost all matroids are paving. For paving matroids, the solvable ones are those that do not contain a full-rank submatroid where every point has a degree at least equal to the rank of the matroid. For example, for matroids of rank three, nearly all instances studied in the literature are solvable configurations.
\end{remark}

\begin{notation}\label{ño}
For an $r$-paving matroid $M$ on $[n]$ and a dependent hyperplane $l\in \Li$ with $\size{Q_{l}}\leq r-1$ we use the notation $M- l$, with slight abuse of notation,  to denote the submatroid with points $([n]\backslash l)\cup (Q_{l})$ and we say that $M- l$ is the submatroid obtained by deleting the dependent hyperplane $l$. 
We denote by $M- \{l_{1},\ldots,l_{k}\}$ the submatroid obtained by deleting the dependent hyperplanes $l_{1},\ldots,l_{k}$ in that order, where $l_{j}$ is a dependent hyperplane of $M-\{l_{1},\ldots,l_{j-1}\}$ for $j\in [k]$.
\end{notation}

Note that in Notation~\ref{ño}, we reuse the notation from~\ref{kb}, which was previously used for nilpotent matroids. However, the context will make it clear, as we will now exclusively consider solvable matroids. Using Lemma~\ref{equiv}(ii), we obtain the following corollary, whose proof is similar to that of Corollary~\ref{rel}.

\begin{corollary}\label{rel 6}
Let $M$ be an $r$-paving matroid. Then $M$ is solvable if and only if there exists a sequence of dependent hyperplanes $l_{1},\ldots,l_{k}$ such that:
\begin{itemize}
    \item $M- \set{l_{1},\ldots,l_{k}}$ does not contain any dependent hyperplane, and 
\item $\lvert{l_{j}\cap Q_{M-\set{l_{1},\ldots,l_{j-1}}}\rvert} \leq r-1$
for every $j\in [k]$.
\end{itemize}
\end{corollary}

 Applying Corollary~\ref{rel 6}, we have that: 
\begin{corollary}\label{cors}
A 
rank-three matroid $M$ is \textit{solvable} if and only if there is a sequence of lines $l_{1},\ldots,l_{k}$ such that:
\begin{itemize}
    \item $M- \set{l_{1},\ldots,l_{k}}$ contains no line, and 
    \item 
$\lvert{l_{j}\cap Q_{M-\set{l_{1},\ldots,l_{j-1}}}\rvert} \leq 2$
for every $j\in [k]$. 
\end{itemize}
\end{corollary}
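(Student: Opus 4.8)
\emph{Proof proposal.} The plan is to obtain this corollary as the rank-$3$ instance of Corollary~\ref{rel 6}; the only work is to check that a point-line configuration lies within the scope of that corollary and that all the notation specializes correctly. First I would recall that, by definition, a point-line configuration $M$ is a realizable simple matroid of rank $3$. Since every $4$-element subset of a rank-$3$ matroid is dependent, while simplicity forces every $2$-element subset to be independent, every circuit of $M$ has size $3$ or $4$; hence $M$ is a $3$-paving matroid in the sense of Definition~\ref{pav}. Under the equivalence relation of Definition~\ref{general}, its dependent hyperplanes are exactly its lines, and for a line $l$ one has $P_{l}=P_{M}\cap l$, the set of points of $l$ lying on at least three lines, in agreement with Notation~\ref{m6}: here $a_{p}=3-\size{\mathcal{L}_{p}}$, so $a_{p}\leq 0$ precisely when $\size{\mathcal{L}_{p}}\geq 3$.

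With these identifications in place, the second step is to apply Corollary~\ref{rel 6} directly with $n=3$. Deleting a line $l$ with $\size{P_{l}}\leq 2$ via Notation~\ref{ño} produces the full-rank submatroid on $(\mathcal{P}_{M}\setminus l)\cup P_{l}$, which is again a simple matroid of rank $3$, hence again a point-line configuration; thus the recursive deletion process of Corollary~\ref{rel 6} stays inside the class under consideration. Substituting $n=3$ turns the condition $\size{l_{j}\cap P_{M\setminus\set{l_{1},\ldots,l_{j-1}}}}\leq n-1$ into $\leq 2$, and ``contains no dependent hyperplane'' into ``contains no line'', so both implications of the stated equivalence follow at once from the corresponding ``if and only if'' in Corollary~\ref{rel 6}.

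I do not expect any genuine obstacle; the argument is a routine unwinding of definitions. The one point deserving a moment's care is the coexistence of two uses of the symbol $M\setminus l$ (the nilpotent version of Notation~\ref{kb} versus the solvable version of Notation~\ref{ño}): since the present statement involves $P_{M}$ rather than $S_{M}$, the relevant deletion is the one from Notation~\ref{ño}, which is exactly the operation employed in Corollary~\ref{rel 6}, so there is no ambiguity. As an alternative, one could instead prove the corollary from scratch by imitating the induction in the proof of Corollary~\ref{rel}, invoking Lemma~\ref{equiv}(ii) in place of Lemma~\ref{equiv 2}(ii); but reducing to Corollary~\ref{rel 6} is shorter and avoids repeating that argument.
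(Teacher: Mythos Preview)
Your proposal is correct and matches the paper's approach exactly: the paper simply states ``Applying Corollary~\ref{rel 6}, we have that:'' before this corollary, with no further argument. Your write-up just makes explicit the routine verification that a point-line configuration is a $3$-paving matroid and that the notation specializes as claimed, which is precisely the intended reading.
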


We now recall a theorem from \cite{clarke2021matroid}.
\begin{theorem}[\textup{\cite[Theorem~4.5]{clarke2021matroid}}]\label{rem irred 3}
Let $M$ be a rank-three matroid on $[n]$ and $l\in \Li$ a line. 
If $\size{Q_{l}}\leq 2$ and 
$\Gamma_{M-l}$ is irreducible, then $\Gamma_{M}$ is also irreducible. 
\end{theorem}

Thus, we can derive the following result.

\begin{theorem}\label{teosol}
Let $M$ be a solvable 
matroid of rank three. Then $\Gamma_{M}$ is irreducible.
\end{theorem}
\begin{proof}
The result follows from applying Corollary~\ref{cors}
together with Theorem~\ref{rem irred 3} and induction.
\end{proof}

\begin{remark}
We now provide some motivation for why the assumption of $M$ being solvable is necessary in Theorem~\ref{teosol}. The key idea relies on examining the role of points with degree at most two. Specifically, if $p$ is a point of degree at most two in a rank-three matroid $M$, the irreducibility of $\Gamma_{M\setminus \{p\}}$ implies the irreducibility of $\Gamma_{M}$. This follows from the fact that $\Gamma_{M}$ can be described as a fibration over $\Gamma_{M\setminus \{p\}}$. The fibration can be interpreted in the following cases:
\begin{itemize}
\item If $\text{deg}(p)=0$, then $\Gamma_{M}$ is constructed from $\Gamma_{M\setminus \{p\}}$ along with the selection of a generic vector in $\CC^{3}$ corresponding to $p$.
\item If $\text{deg}(p)=1$ with $\Li_{p}=\{l\}$, then $\Gamma_{M}$ is constructed from $\Gamma_{M\setminus \{p\}}$ and the selection of a generic vector in the two-dimensional subspace corresponding to $l$.
\item If $\text{deg}(p)=2$ with $\Li_{p}=\{l_{1},l_{2}\}$, then $\Gamma_{M}$ is constructed from $\Gamma_{M\setminus \{p\}}$ and the selection of a generic vector in the one-dimensional subspace corresponding to the intersection of the subspaces associated with $l_{1}$ and $l_{2}$.
\end{itemize}
We also emphasize that examining the points lying on at least three lines is a natural consideration, one that has been adopted in several other works, such as \cite{nazir2012connectivity, corey2023singular, guerville2023connectivity, clarke2021matroid}.
\end{remark}

Observe that Theorem~\ref{teosol} simplifies the process of identifying matroids of rank three with irreducible realization spaces; we need only verify that the solvability chain terminates at the empty set. Now we will see that every nilpotent matroid is solvable.

\begin{lemma}\label{k3}
If $M$ is a nilpotent matroid, then  $M$ is also solvable.
\end{lemma}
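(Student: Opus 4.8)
The plan is to reduce the statement to the two combinatorial characterizations already available: Lemma~\ref{equiv}(i) for solvability and Lemma~\ref{equiv 2}(i) for nilpotency. The bridge between them is the elementary containment $P_{N}\subseteq S_{N}$, which I would prove for every submatroid $N\subseteq M$.

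First I would establish that containment. Fix a submatroid $N\subseteq M$; since $M$ is loopless, so is $N$, hence every circuit of $N$ has size at least $2$, and consequently every subspace $l$ of $N$ satisfies $\rank(l)\geq 1$. Now take $p\in N$ with $p\notin S_{N}$, i.e.\ $\size{\mathcal{L}_{p}}\leq 1$ (subspaces taken inside $N$). If $\mathcal{L}_{p}=\emptyset$, then by~\eqref{am} we get $a_{p}=\rank(N)\geq 1>0$; if $\mathcal{L}_{p}=\set{l}$, then $a_{p}=\rank(l)\geq 1>0$. In both cases $p\notin P_{N}$, so $P_{N}\subseteq S_{N}$, as claimed.

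Then I would argue by contraposition. Suppose $M$ is not solvable. By Lemma~\ref{equiv}(i) there is a submatroid $\emptyset\subsetneq N\subseteq M$ with $P_{N}=N$; combining this with the inclusion just proved gives $N=P_{N}\subseteq S_{N}\subseteq N$, hence $S_{N}=N$, and Lemma~\ref{equiv 2}(i) then shows that $M$ is not nilpotent. Equivalently, every nilpotent matroid is solvable. (Alternatively one could prove directly by induction that $M^{j}\subseteq M_{j}$ for all $j$, using $M^{j+1}=P_{M^{j}}\subseteq S_{M^{j}}\subseteq S_{M_{j}}=M_{j+1}$, where the last step is the monotonicity of $S$ recorded in Notation~\ref{k1}; the route through the two lemmas is shorter.) I do not expect a genuine obstacle here: the only point needing a little care is the degenerate case $\rank(N)\leq 1$, where $N$ has no subspaces at all so that $P_{N}=S_{N}=\emptyset$ and the containment is trivial, and making sure it is precisely looplessness that forces $\rank(l)\geq 1$ for every subspace $l$.
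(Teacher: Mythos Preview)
Your proof is correct. The paper argues by direct induction along the two chains, showing $M^{j}\subseteq M_{j}$ via $M^{j+1}=P_{M^{j}}\subseteq P_{M_{j}}\subseteq S_{M_{j}}=M_{j+1}$, whereas your main route goes by contraposition through the characterization lemmas (Lemma~\ref{equiv}(i) and Lemma~\ref{equiv 2}(i)). Both rest on the same elementary fact $P_{N}\subseteq S_{N}$; your route has the small advantage that it needs this inclusion only for a single witness $N$ and avoids invoking monotonicity of $P$ under passage to submatroids (which is a little delicate because the rank in~\eqref{am} changes). The alternative you sketch in parentheses, using monotonicity of $S$ rather than of $P$, is in fact a cleaner variant of the paper's own induction.
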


\begin{proof}
We use induction to prove that $M^j \subseteq M_j$ for every $j \geq 0$.
For $j = 0$, we have $M^0 = M$ and $M_0 = M$. Thus, $M^0 \subseteq M_0$ holds.
For the inductive hypothesis suppose that $M^{j}\subset M_{j}$, then \[M^{j+1}=Q_{M^{j}}\subset Q_{M_{j}}\subset S_{M_{j}}=M_{j+1}.\] 
Since $M$ is nilpotent, there exists an index $k$ such that $M_k = \emptyset$, and so $M^k = \emptyset$, hence  
$M$ is solvable.
\end{proof}

\subsection{Paving matroids without points of degree greater than two}\label{solv 3}


The main result of this subsection is Theorem~\ref{irreducible}, which establishes that paving matroids without points of degree greater than two possess irreducible realization spaces. This generalizes \textup{\cite[Theorem 4.5]{clarke2021matroid}}, where a similar conclusion was reached for matroids of rank three. Throughout this section, we assume that $M$ is an $r$-paving matroid with no points of degree greater than
two on $[n]$. We will avoid repeating this assumption in the statements of lemmas.

\medskip

Our goal now is to prove Theorem~\ref{irreducible}, for which we first need to establish the following lemmas.

\begin{lemma}\label{basis}
Let $S$ and $T$ be subspaces of $\mathbb{C}^r$ with dimensions $k$ and $r-k$, respectively, such that $S \cap T = \{0\}$. Let $v_1, \ldots, v_k$ be vectors with $\is{v_{1},\ldots,v_{k}}+T=\CC^{r}$. For each $i \in [k]$, let $t_i$ be an extensor associated with the subspace $T + \langle v_i \rangle$. Let $s$ be an extensor associated with $S$. Then, the vectors
$\{t_i \wedge s : i \in [k]\}$
form a basis for $S$.
\end{lemma}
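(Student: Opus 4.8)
The plan is to first identify each vector $t_{i}\wedge s$ geometrically via the meet operation of Lemma~\ref{klj}, and then to deduce that the $k$ resulting vectors are linearly independent using the two complementary direct-sum decompositions of $\mathbb{C}^{n}$ available here, namely $\mathbb{C}^{n}=S\oplus T$ and $\mathbb{C}^{n}=\langle v_{1},\ldots,v_{k}\rangle\oplus T$.

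First I would record the dimension bookkeeping. Since $\langle v_{1},\ldots,v_{k}\rangle+T=\mathbb{C}^{n}$ and $\dim T=n-k$, comparing dimensions forces $\dim\langle v_{1},\ldots,v_{k}\rangle=k$ and $\langle v_{1},\ldots,v_{k}\rangle\cap T=\{0\}$; in particular the $v_{i}$ are linearly independent and $v_{i}\notin T$, so $\dim(T+\langle v_{i}\rangle)=n-k+1$, and $\dim\bigl((T+\langle v_{i}\rangle)\cap S\bigr)=(n-k+1)+k-n=1$ because $(T+\langle v_{i}\rangle)+S=\mathbb{C}^{n}$. Now apply Lemma~\ref{klj} to the extensors $t_{i}$ (length $n-k+1$) and $s$ (length $k$): their lengths sum to $n+1\ge n$, so the meet is defined, and since $\overline{t_{i}}+\overline{s}=(T+\langle v_{i}\rangle)+S=\mathbb{C}^{n}$, the last item of Lemma~\ref{klj} yields $t_{i}\wedge s\neq 0$ together with $\overline{t_{i}\wedge s}=(T+\langle v_{i}\rangle)\cap S$, a $1$-dimensional subspace of $S$. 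Identifying $\bigwedge^{1}\mathbb{C}^{n}$ with $\mathbb{C}^{n}$, this shows $w_{i}:=t_{i}\wedge s$ is a nonzero vector lying in $S$; writing $w_{i}=u_{i}+\lambda_{i}v_{i}$ with $u_{i}\in T$ and $\lambda_{i}\in\mathbb{C}$ (a unique decomposition, since $T+\langle v_{i}\rangle$ is a direct sum), we have $\lambda_{i}\neq 0$, for otherwise $w_{i}\in S\cap T=\{0\}$.

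For linear independence, suppose $\sum_{i}c_{i}w_{i}=0$. Then $\sum_{i}c_{i}u_{i}=-\sum_{i}c_{i}\lambda_{i}v_{i}$, where the left-hand side lies in $T$ and the right-hand side in $\langle v_{1},\ldots,v_{k}\rangle$; since these subspaces intersect only in $\{0\}$, both sides vanish, and then linear independence of the $v_{i}$ together with $\lambda_{i}\neq 0$ gives $c_{i}=0$ for all $i$. Thus $\{w_{1},\ldots,w_{k}\}$ is a set of $k$ linearly independent vectors in the $k$-dimensional space $S$, hence a basis of $S$. The only mildly delicate part is the dimension count guaranteeing simultaneously that each meet $t_{i}\wedge s$ is nonzero and that it lands inside $S$; once that is in place, the linear independence is immediate from the complementarity of $T$ with both $S$ and $\langle v_{1},\ldots,v_{k}\rangle$.
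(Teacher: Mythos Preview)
Your proof is correct and follows essentially the same approach as the paper's: identify each $t_i\wedge s$ as a nonzero vector in $(T+\langle v_i\rangle)\cap S$ via Lemma~\ref{klj}, write it as $u_i+\lambda_i v_i$ with $u_i\in T$ and $\lambda_i\neq 0$ (using $S\cap T=\{0\}$), and then conclude linear independence from $\langle v_1,\ldots,v_k\rangle\cap T=\{0\}$. Your version spells out the dimension bookkeeping and the final independence argument more explicitly than the paper does, but the structure is the same.
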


\begin{proof}
Since $\dim(T + \langle v_i \rangle) + \dim(S) = r + 1$ and $T + S = \mathbb{C}^r$, we know that $t_i \wedge s$ is a non-zero vector, which we denote as $s_i$. This vector $s_i$ belongs to $T + \langle v_i \rangle \cap S$, so we can write $s_i = w_i + \lambda_i v_i$ with $w_i \in T$. Note that $\lambda_i \neq 0$ because $S \cap T = \{0\}$. Furthermore, since $\langle v_1, \ldots, v_k \rangle \cap T = \{0\}$, the vectors $\{s_i : i \in [k]\}$ are linearly independent. Thus, they form a basis for $S$.
\end{proof}

\begin{definition}\normalfont
Denote the subspace $\langle e_{r-1},e_{r} \rangle\subset \CC^{r}$ by $E$.
For each $1\leq i\leq r-2$, denote the subspace $E+ \langle e_{i} \rangle$ by $E_{i}$. We say that $\gamma \in \Gamma_{M}$ is \textit{normal} if $V_{\gamma,p} \cap E = \{0\}$ for every point $p\in [n]$ of degree two. We denote by $\Gamma_{M}^{1}$ the subset of $\Gamma_{M}$ consisting of these \textit{normal} collections.
\end{definition}

\begin{lemma}\label{gamma 1}
We have that $\Gamma_{M}^{1}$ is dense in $\Gamma_{M}$.
\end{lemma}

\begin{proof}
We will prove the lemma by seeing that any $\gamma \in \Gamma_{M}$ can be infinitesimally perturbed to a collection in $\Gamma_{M}^{1}$.
If $\gamma$ is normal, then the assertion holds. 
Otherwise, assume there is a point $p \in [n]$ of degree two such that $V_{\gamma,p} \cap E \neq \{0\}$. Let $T$ be a projective~transformation~infinitesimally close to $\Id$ with $T(V_{\gamma,p}) \cap E = \{0\}$. Then, $T(\gamma)$ represents a perturbation of $\gamma$,~and 
$$V_{T(\gamma),p} \cap E = T(V_{\gamma,p}) \cap E = \{0\}.$$
Repeating this process, we will eventually obtain a normal $\kappa \in \Gamma_{M}^{1}$ after a finite number of steps.
\end{proof}

We now establish the main result of this subsection.

\begin{theorem}\label{irreducible}
 If $M$ be a paving matroid without points of degree greater than two, then $\Gamma_{M}$ is irreducible.
\end{theorem}

\begin{proof}

Recall that $M$ has rank $r$ and ground set $[n]$ with $\Li$ denoting its set of dependent hyperplanes. Define the space $X$ as
\[\textstyle{\prod_{l \in \Li} (\mathbb{C}^{r})^{r-1} \times \prod_{p\in [n]} \mathbb{C}^{a_{p}}.}\]

The variety $X$ is clearly irreducible. For each tuple $\left( h_{l}, \lambda_{p} : l \in \Li, p\in [n]\right) \in X$ and each $q\in S_{M}$, we have that
\[\textstyle\bigwedge_{l \in \Li_{q}} h_{l} \wedge E_{i}\]
is a vector for every $i \in [r-2]$. We denote this vector by $k_{q,i}$. Note that $k_{q,i} \in \overline{h_{l}}$ for every $l \in \Li_{q}$, where $\overline{h_{l}}$ denotes the subspace associated with the extensor $h_{l}$.

Now we define the map $\psi: X \rightarrow (\CC^{r})^{n}$ as follows: 
\[
\psi\left(h_{l}, \lambda_{p} : l \in \Li, p \in [n]\right)_{q} =
\begin{cases}
\lambda_{q} & \text{if } \Li_{q}=\emptyset, \\ 
\textstyle{\sum_{i=1}^{r-1}\lambda_{q,i}(h_{l})_{i}} & \text{if $\Li_{q}=\{l\}$}.\\
\textstyle{\sum_{i=1}^{r-2} \lambda_{q,i} k_{q,i}} & \text{if } \size{\Li_{q}}=2.
\end{cases}
\]
We will show that $\im(\psi) \subset V_{\mathcal{C}(M)}$. To establish this, we prove that $\psi(Y) \in V_{\mathcal{C}(M)}$, where $Y = (h_{l}, \lambda_{p} : l \in \Li, p\in [n])$ is an arbitrary element of $X$. Specifically, we  prove that $\rank \{\psi(Y)_{q} : q \in l\} \leq r-1$ for every $l \in \Li$. We will do this by showing that $\{\psi(Y)_{q} : q \in l\} \subset \overline{h_{l}}$. We will proceed by considering the following cases:
\begin{itemize}
\item Suppose $\Li_{q}=\{l\}$, then $\psi(Y)_{q} \in \overline{h_{l}}$, because $(h_{l})_{i} \in \overline{h_{l}}$ for every $i \in [r-1]$.
\item If $\size{\Li_{q}}=2$, then $\psi(Y)_{q} \in \overline{h_{l}}$, because $k_{q,i} \in \overline{h_{l}}$ for every $i \in [r-2]$.
\end{itemize}
We will now prove that $\Gamma_{M}^{1} \subset \im(\phi)$. 
Let $\gamma \in \Gamma_{M}^{1}$. For each $q \in S_{M}$, we have $V_{\gamma, q} \cap E = \{0\}$. By Lemma~\ref{basis}, we deduce that the vectors
$\textstyle{\{\bigwedge_{l \in \Li_{q}} h_{l} \wedge E_{i} : i \in [r-2]\}}$
form a basis of $V_{\gamma, q}$, where $h_{l}$ denotes an extensor associated with $\gamma_{l}$. Since $\gamma_{q} \in V_{\gamma, q}$, we can represent $\gamma_{q}$ by the vector $\lambda_{q} \in \CC^{r-2}$ in this basis. It follows that
$\gamma = \psi(h_{l}, \lambda_{q} : l \in \Li, q\in [n])$.
This proves that $\Gamma_{M}^{1} \subset \im(\phi)$.

Since the coordinates of the vectors $k_{q_{i}}$ are polynomials in the vectors of $\gamma$, the map $\psi$ is a polynomial map and hence continuous. Consequently, $\im(\psi)$ is irreducible because $X$ is irreducible. We have also established that $\im(\psi) \subset V_{\mathcal{C}(M)}$ and that $\Gamma_{M}^{1} \subset \im(\phi)$. Therefore, we obtain $\Gamma_{M}^{1} \subset \im(\phi) \subset V_{\mathcal{C}(M)}$, which implies that $\Gamma_{M}^{1}$ is open in $\im(\phi)$, as $\Gamma_{M}^{1}$ is open in $V_{\mathcal{C}(M)}$. Hence, $\Gamma_{M}^{1}$ is irreducible. By Lemma~\ref{gamma 1}, this implies that $\Gamma_{M}$ is also irreducible, completing the proof.
\end{proof}

\section{Applications}\label{applications}

In this section, we describe the motivation and applications of nilpotent and solvable matroids, showing the contexts in which they naturally arise. In particular, we discuss the relevance of studying each of these specialized classes of matroids.  We first introduce key definitions and questions.

\subsection{Hypergraph varieties}

For an $d\times n$ matrix $X=\pare{x_{i,j}}$ of indeterminates, we denote by $X_{F}$ the submatrix of $X$ with columns indexed by $F\subset [n]$. We also recall Notation~\ref{notation 1}.

\begin{definition}\label{def hyp}
A hypergraph $\Delta$ on the vertex set $[n]$ is a subset of the power set $2^{[n]}$. We assume that no proper subset of an element of $\Delta$ is in $\Delta$. The elements of $\Delta$ are refered to as edges.
\begin{itemize}
\item The {\em determinantal hypergraph ideal} of $\Delta$ is
\[I_{\Delta}=\is{\corch{A|B}_{X}:A\subset [d],B\in \Delta,\size{A}=\size{B}}\subset \CC[X].\]
\item The associated variety, which is the zero set of $I_{\Delta}$, is given by
\[V_{\Delta}=\{Y\in \CC^{d\times n}:\rank(Y_{F})<\size{F} \ \text{for each $F\in \Delta$}\}.\]
\end{itemize}
Note that both $I_{\Delta}$ and $V_{\Delta}$ depend on $d$, the dimension of the ambient space. However, for simplicity, we omit $d$ from the notation. 
\end{definition}

The circuit variety of a matroid $M$ is a hypergraph variety, of the hypergraph whose edges correspond to the circuits of $M$. We have the following natural question, studied in \cite{pfister2019primary,clarke2021matroid,clarke2020conditional,liwski2025minimal}.

\begin{question}\label{quest 1}
Determine the irreducible decomposition of the circuit variety $V_{\mathcal{C}(M)}$ or, more generally, of hypergraph varieties $V_{\Delta}$.
\end{question}

In \cite{clarke2021matroid}, a decomposition strategy was proposed by identifying the set of minimally dependent matroids associated with $M$. We now introduce the notion of minimal matroids.

\begin{definition}\label{dependency}
Let $N_{1}$ and $N_{2}$ be matroids on $[n]$. We write $N_{1}\leq N_{2}$ if $\mathcal{D}(N_{1})\subset \mathcal{D}(N_{2})$. This defines a partial order on matroids, known as the \emph{dependency order}. 
\end{definition}

\begin{definition}\label{def min}
The collection of minimal matroids for a matroid $M$ is defined as: \[\min(M)=\min\{N:N>M\}.\] 
\end{definition}

In \cite{liwski2025minimal}, an algorithm for identifying minimal matroids is presented, along with a decomposition strategy for circuit varieties. The key ingredient of this approach is the following proposition, which provides a potential non-redundant decomposition of $V_{\mathcal{C}(M)}$.

\begin{proposition}[\textup{\cite[Proposition~4.1]{liwski2025minimal}}]\label{deco circ}
Let $M$ be a matroid. Then
$V_{\mathcal{C}(M)}=\bigcup_{N\in \min(M)}V_{\mathcal{C}(N)}\ \cup \ V_{M}$.
\end{proposition}

We outline the decomposition strategy from \cite{liwski2025minimal} used to approach Question~\ref{quest 1}, presenting it in the form of an algorithm.

\begin{algorithm}[\textup{\cite[Section~4]{liwski2025minimal}}]\label{algo}
The strategy for decomposing $V_{\mathcal{C}(M)}$ follows these steps:
\begin{itemize}
\item[{\rm (i)}] We begin by identifying the minimal matroids $\min(M)$. 
\item[{\rm (ii)}] Next, the circuit variety is decomposed using Proposition~\ref{deco circ}. In the recursive decomposition step, this process is applied to each circuit variety that appears in the decomposition. The decomposition continues iteratively for any new circuit varieties encountered.
\item[{\rm (iii)}] If we reach to a circuit variety that coincides with its matroid variety then it is substituted by its matroid variety.
\item[{\rm (iv)}] After this step, we obtain a decomposition of $V_{\mathcal{C}(M)}$ as a union of matroid varieties. Then we determine the irreducibility of the matroids varieties in the decomposition.
\item[{\rm (v)}] Finally, redundant components~are~removed.
\end{itemize}
\end{algorithm}

\begin{remark}\label{grid}
    A specific hypergraph that arises in the study of conditional independence models with hidden random variables is a grid, denoted by $\Delta^{s,t}$, where $s$ and $t$ represent the number of states of certain hidden random variables. For this particular hypergraph, the associated hypergraph variety, along with the varieties that appear as irreducible components in its decomposition, have statistical significance. More precisely, they encode inference relationships among random variables and provide an analog of the intersection axiom in the presence of hidden random variables. These varieties and their decomposition have been studied in \cite{clarke2020conditional,clarke2022conditional,Fink} for specific parameters $s$ and $t$, but the most general case remains open. The algorithm and results of this paper provide a pathway toward addressing the general case. In particular, for the previously known cases, the components appearing in the decomposition are mostly nilpotent.
\end{remark}

\subsection{Connections to hypergraph varieties}

In this subsection, we examine how the families of nilpotent and solvable matroids relate to the questions introduced in the previous subsection, highlighting their connections and applications.

\medskip
\noindent{\bf Nilpotent matroids:} Our results on nilpotent matroids enhance the efficiency of Algorithm~\ref{algo}. Specifically, they refine step (iii) by allowing the replacement of a circuit variety associated with a nilpotent rank-three matroid, provided it has no points of degree greater than two, with its matroid variety, as ensured by Theorem~\ref{je}. For further details, we refer the reader to \cite{liwski2025minimal}. We illustrate this idea with an example from the same work, omitting certain details for brevity. A complete version of the example can be found in \cite{liwski2025minimal}.

\begin{example}[\textup{\cite[Subsection~5.1]{liwski2025minimal}}]
Consider the {\bf Fano plane} $M_{\text{Fano}}$ in Figure~\ref{new figure} (Right). The minimal matroids of $M_{\text{Fano}}$ are as follows; see Figure~\ref{figure min}, from left to right:

\begin{itemize}
\item[{\rm (i)}] The uniform matroid $U_{2,7}$.
\item[{\rm (ii)}] The matroids $M_{\text{Fano}}(i)$ for $i\in [7]$.
\item[{\rm (iii)}] A line of $M_{\text{Fano}}$, with the remaining four points coinciding outside this line. 
\item[{\rm (iv)}] A matroid with one line containing three parallel points and a free point outside it 
\end{itemize}

\begin{figure}
    \centering
    \includegraphics[width=0.8\textwidth, trim=0 0 0 0, clip]{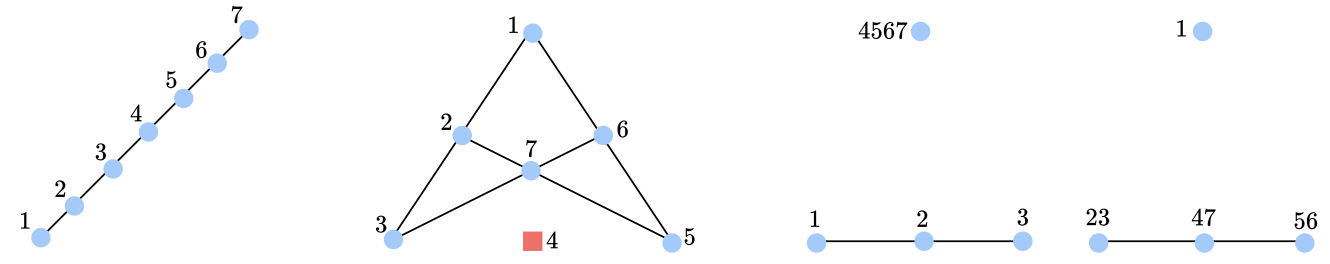}
    \caption{Minimal matroids of Fano configuration.}
    \label{figure min}
\end{figure}
We label the matroids of types (iii) and (iv) as
$A_{j},B_{k}$, 
for $j,k\in [7]$. Given that $M_{\text{Fano}}$ is not realizable, applying Proposition~\ref{deco circ} this leads to the following decomposition:

\[V_{\mathcal{C}(M_{\text{Fano}})}=V_{\mathcal{C}(U_{2,7})}\bigcup_{i=1}^{7}V_{\mathcal{C}(M_{\text{Fano}}(i))}\bigcup_{j=1}^{7}V_{\mathcal{C}(A_{j})}\bigcup_{k=1}^{7}V_{\mathcal{C}(B_{k})}.\]
Since the matroids $U_{2,7},A_{j}$ and $B_{k}$ are all nilpotent, their
matroid and circuit varieties coincide. Thus, 
\[V_{\mathcal{C}(M_{\text{Fano}})}=V_{U_{2,7}}\bigcup_{i=1}^{7}V_{M_{\text{Fano}}(i)}\bigcup_{j=1}^{7}V_{A_{j}}\bigcup_{k=1}^{7}V_{B_{k}}.\]
All matroids in this decomposition are solvable. Thus, 
all these varieties are irreducible. Therefore, the above decomposition is the irreducible decomposition of $V_{\mathcal{C}(M_{\text{Fano}})}$. 
\end{example}

\noindent{\bf Solvable matroids:} We now discuss the occurrence of solvable matroids in previous works.

\begin{itemize}
\item From Remark~\ref{arrangemnet}, we observe that the realization spaces of inductively connected line arrangements, as defined in \cite{nazir2012connectivity}, correspond to those of solvable rank-three matroids.
\item Identifying matroids with irreducible realization spaces is a crucial step in Algorithm~\ref{algo}, specifically in step (iv). Theorems~\ref{teosol} and~\ref{irreducible} establish the irreducibility of certain subfamilies of solvable matroids. For instance, this criterion was applied in the final step of the previous example to determine the irreducibility of the corresponding matroid varieties. For further examples, we refer the reader to \cite{liwski2025minimal, liwski2025}.
\end{itemize}

\noindent{\bf Paving matroids without points of degree greater than two:} The study of this family of matroids is closely related to the decomposition of grid hypergraphs from Remark~\ref{grid}. Specifically, setting $s=t=d$ in the grid $\Delta^{s,t}$ yields a variety $V_{\Delta^{s,t}}$ that coincides with the circuit variety of a $d$-paving matroid without points of degree greater than two. 
See \cite{Fatemeh4} for more details.

\vspace{-3mm}

\printbibliography 

\end{document}